\documentclass[leqno]{amsart}
\usepackage{amssymb,latexsym}
\usepackage{url}
\usepackage{newlattice}
\usepackage{microtype}

\newtheorem{theorem}{Theorem}
\newtheorem{lemma}[theorem]{Lemma}
\newtheorem{corollary}[theorem]{Corollary}
\newtheorem{proposition}[theorem]{Proposition}

\newtheorem{question}{Question}

\newcommand{\varV}{\mathbf{V}}
\newcommand{\varD}{\mathbf{D}}
\newcommand{\varL}{\mathbf{L}}
\newcommand{\cp}{^\mathrm{c}}
\newcommand{\SC}[1]{\msf{C}_{#1}}
\DeclareMathOperator{\Free}{Free}

\raggedbottom

\begin{document}

\begin{center}
\texttt{Comments, corrections, and related references welcomed,
as always!}\\[.5em]
{\TeX}ed \today
\vspace{2em}
\end{center}

\title{Isotone maps on lattices}
\thanks{This preprint is accessible online at
\url{http://math.berkeley.edu/~gbergman/papers/}
}

\subjclass[2010]{Primary: 06B25.
Secondary: 06B20, 06B23.}
\keywords{Free product of lattices;
varieties, prevarieties and quasivarieties of lattices;
isotone map; free lattice on a partial lattice; semilattice.}

\author{G. M. Bergman}
\email[G. M. Bergman]{gbergman@math.berkeley.edu}
\urladdr[G. M. Bergman]{http://math.berkeley.edu/~gbergman/}
\address{University of California\\
         Berkeley, CA\\
         USA}

\author{G. Gr\"{a}tzer}
\email[G. Gr\"atzer]{gratzer@me.com}
\urladdr[G. Gr\"atzer]{http://server.math.umanitoba.ca/homepages/gratzer/}
\address{Department of Mathematics\\
         University of Manitoba\\
         Winnipeg, MB R3T 2N2\\
         Canada}

\begin{abstract}
Let $\E L = (L_i \mid i\in I)$ be a family of lattices in
a nontrivial lattice variety~$\varV$, and let
$\gf_i \colon L_i \to M$, for $i\in I$, be isotone maps
(not assumed to be lattice homomorphisms)
to a common lattice $M$ (not assumed to lie in $\varV$).
We show that the maps $\gf_i$ can be extended to
an isotone map $\gf\colon L \to M$,
where $L=\Free_{\varV}\E L$ is the free product of the $L_i$ in $\varV$.
This was known for
$\varL=\varV$, the variety of all lattices (Yu.\,I. Sorkin 1952).

The above free product $L$ can be viewed as the
free lattice in $\varV$ on the partial lattice $P$ formed
by the disjoint union of the $L_i$.
The analog of the above result does not, however, hold for
the free lattice $L$ on an arbitrary partial lattice $P$.
We show that the only codomain lattices $M$ for which
that more general statement holds are the complete lattices.
On the other hand, we prove the analog of our main result
for a class
of partial lattices $P$ that are not-quite-disjoint
unions of lattices.

We also obtain some results similar to our main one, but
with the relationship \tbf{lattices\,:\,orders} replaced
either by \tbf{semilattices\,:\,orders}
or by \tbf{lattices\,:\,semilattices}.

Some open questions are noted.
\end{abstract}
\maketitle

\section{Introduction}\label{S.intro}
By Yu.\,I. Sorkin \cite[Theorem~3]{sorkin},
if $\E L = (L_i \mid i\in I)$ is a family of lattices
and $\gf_i\colon L_i\to M$ are isotone maps of the lattices $L_i$
into a lattice $M$, then there exists an
isotone map $\gf$ from the
free product $\Free\E L$ of the $L_i$ to $M$
that extends all the $\gf_i$.
(For a clarification of Sorkin's proof, see A.\ Kravchenko \cite{AK};
and for an alternative, simpler proof,
G. Gr\"{a}tzer, H. Lakser and C.\,R. Platt \cite[\S4]{GLP70}.)

Our main result, proved in
Section~\ref{S.main}, is a generalization of this fact,
with $\Free\E L$ replaced by $\Free_{\varV}\E L$, the
free product of the $L_i$ in any nontrivial variety $\varV$ of lattices
containing them --- though not necessarily containing~$M$.
(In Section~\ref{S.alt_pfs}, we explore some variants of our proof of
this result.)

We may regard $\Free_{\varV}\E L$ as the free
lattice in $\varV$ on the partial lattice $P$ given by the disjoint
union of the $L_i$.
Does the analog of the above result hold for more general
partial lattices $P$ and their free lattices $L$?
In Section~\ref{S.complete} we find that the lattices $M$ such that
this statement holds for \emph{all} partial lattices $P$ are
the complete lattices.
On the other hand, we describe in Section~\ref{S.retracts}
a class of partial lattices $P$, related to but distinct
from the class considered in Section~\ref{S.main},
for which the full analog of the result of that section holds.

Since \emph{semilattices} lie between orders and lattices,
it is plausible that statements similar to our main result should hold,
either with ``lattice'' weakened to ``semilattice'', or with
``lattice'' unchanged but ``isotone map''
strengthened to ``semilattice homomorphism''.
In~Section~\ref{S.semilat1} we shall find that the
former statement is easy to prove.
In that section and Section~\ref{S.semilat2},
we obtain several approximations to the latter statement;
we do not know whether the full statement holds.

The reader familiar with the concepts of \emph{quasi}variety and
\emph{pre}variety will find that the proofs given
in this note for varieties of lattices in fact work
for those more general classes.
However, varieties are not sufficient for the result of
Section~\ref{S.semilat2}, so we develop that in terms of prevarieties.

In Section~\ref{S.questions} we note some open questions.

For general definitions and results in lattice theory
see \cite{GLT2} or \cite{GLT3}.

\section{Extending isotone maps to free product lattices}\label{S.main}

Let $\varV$ be a nontrivial lattice variety, that is, a variety $\varV$
of lattices having a member with more than one element.
Let $\E L = (L_i \mid i\in I)$ be a family of lattices in~$\varV$,
and $L = \Free_{\varV}\E L$  their free product in $\varV$.
Finally, let $(\gf_i\colon L_i\to M\mid i\in I)$ be a family
of isotone maps into a lattice $M$, not assumed to lie in $\varV$.

To show that the $\gf_i$ have a common extension to $L$,
it suffices, by the universal property of
$L$, to find some $L'\in\varV$ such that
each map $\gf_i$ factors $L_i\to L'\to M$, where the first map is
a lattice homomorphism, and the second an
isotone map not depending on $i$.
So~let us, for now, forget free products,
and obtain such a lattice~$L'$.

We first note (as remarked in~\cite[last paragraph]{GLP70}
for $\varV=\varL$) that this is easy if $M$ has a least
element, or more generally, if its subsets $\gf_i(L_i)$ have
a common lower bound $e\in M$.
In that case, we begin by enlarging all the $L_i$ to lattices
$\bar{L}_i=\{e_i\}+L_i$, where $e_i$ is a new least
element, and extend the $\gf_i$
to maps $\bar{\gf}_i\colon \bar{L}_i\to M$ mapping $e_i$ to $e$.
Now let $L'$ be the sublattice of $\prod(\bar{L}_i \mid i\in I)$
consisting of the elements $x=(x_i\mid i\in I)$ such that
$x_i=e_i$ for all
but finitely many $i$;
and let us map each $L_i$ into $L'$ by the homomorphism
sending $x\in L_i$ to the element having $i$-component $x$, and
$j$-component $e_j$ for all $j\neq i$.
We now map $L'$ to $M$ using the isotone map $\gy$ given by
\begin{equation}\begin{minipage}[c]{25pc}\label{d.pre_gy}
$\gy(x)=\JJm{\bar{\gf}_i(x_i)}{i\in I}$ \quad
for $x=(x_i\mid i\in I)\in L'$.
\end{minipage}\end{equation}
This infinite join is defined because all but finitely many of
the joinands are $e$; and it is easy to verify that for each
$i$, the composite
map $L_i\to L'\to M$ is the given isotone map $\gf_i$, as required.

If, rather, the $\gf_i(L_i)$ have a common \emph{upper} bound,
the dual construction is, of course, available.

In the absence of either sort of bound, we shall follow
the same pattern of adjoining to the $L_i$ elements
$e_i$ with a common image $e$ in $M$ (this time an arbitrary
element of that lattice); but that construction takes a bit
more work, as does the one analogous to the definition~(\ref{d.pre_gy})
of the isotone map $\gy:L'\to M$.
The first of these steps is carried out in the following lemma
(where $L$ corresponds to the above~$L_i$).

\begin{lemma}\label{L.L&e}
Let $\gf\colon L\to M$ be any isotone map of lattices, and $e$ any
element of $M$.
Then there exists a lattice extension $\bar{L}$ of $L$,
and an isotone map
$\bar{\gf}\colon \bar{L}\to M$ extending $\gf$,
such that $e\in\bar{\gf}(\bar{L})$.
Moreover, $\bar{L}$ can be taken to lie in any nontrivial lattice
variety $\varV$ containing $L$.
\end{lemma}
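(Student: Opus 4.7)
The plan is to realise $\bar L$ as a direct product of $L$ with a small lattice in $\varV$ that supplies enough ``room'' to introduce a new element mapping to $e$. Since $\varV$ is nontrivial it contains the two-element chain $C_2$, and hence, being closed under products, the four-element Boolean lattice $C_2\times C_2$; I take
\[
\bar L \;=\; L\times C_2\times C_2 \;\in\; \varV,
\]
and identify $L$ with the sublattice $\{(x,0,1):x\in L\}$, which is plainly closed under the coordinatewise meet and join.

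Extend $\gf$ to $\bar\gf\colon\bar L\to M$ by
\begin{align*}
\bar\gf(x,0,0) &= \gf(x)\wedge e,\\
\bar\gf(x,0,1) &= \gf(x),\\
\bar\gf(x,1,0) &= e,\\
\bar\gf(x,1,1) &= \gf(x)\vee e.
\end{align*}
This agrees with $\gf$ on the distinguished copy indexed by $(0,1)$, and every element of the copy indexed by $(1,0)$ maps to $e$, so $e\in\bar\gf(\bar L)$. Isotonicity reduces to checking, for each of the nine pairs $(a,b)\le(c,d)$ in $C_2\times C_2$, the corresponding inequality in $M$ whenever $x\le y$ in $L$; each one --- say $\gf(x)\wedge e\le\gf(y)$, or $\gf(x)\le\gf(y)\vee e$, or $\gf(x)\wedge e\le e\le\gf(y)\vee e$ --- is immediate from the isotonicity of $\gf$ together with the elementary properties of $\wedge$ and $\vee$ in $M$.

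The reason a two-dimensional ``width'' $C_2\times C_2$ is wanted, rather than a single chain such as $C_2$ or $C_3$, is what I expect to be the one substantive point: when $\gf(L)$ contains no element comparable with $e$, the values $\gf(x)\wedge e$ and $\gf(x)\vee e$ are strictly separated from $e$, so no copy of $L$ placed above or below the distinguished copy can realise $e$ in its image. The copy indexed by $(1,0)$ is incomparable in $\bar L$ with the distinguished copy indexed by $(0,1)$, and that single incomparability is exactly what allows the constant assignment $\bar\gf(x,1,0)=e$ to coexist with $\gf$ without violating isotonicity. Everything else in the verification is routine bookkeeping.
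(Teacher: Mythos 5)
Your construction is exactly the paper's: the same lattice $\bar{L}=L\times\SC 2\times\SC 2$, the same embedding $x\mapsto(x,0,1)$, and the same four-case definition of $\bar{\gf}$, with the same routine isotonicity check. Correct, and essentially identical to the published proof.
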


\begin{proof}
Let $\bar{L}=L\times \SC 2 \times \SC 2$,
where $\SC 2$ is the $2$-element lattice
$\{0,1\}$, and embed $L$ in $\bar{L}$ by $x\mapsto(x,0,1)$.
Define $\bar{\gf}\colon \bar{L}\to M$ by\vspace{.3em}
\begin{equation}\begin{minipage}[c]{25pc}\label{d.gf}
$\begin{cases}
\begin{array}{rrc}
\bar{\gf}(x,0,1) &=& \gf(x),\\[.3em]
\bar{\gf}(x,1,0) &=& e,\\[.3em]
\bar{\gf}(x,0,0) &=& \gf(x)\wedge e,\\[.3em]
\bar{\gf}(x,1,1) &=& \gf(x)\vee e.
\end{array}
\end{cases}$\vspace{.3em}
\end{minipage}\end{equation}
It is easy to check that $\bar{\gf}$ is isotone, and it
clearly has $e$ in its range.
Since~$\SC 2=\{0,1\}$ belongs to every nontrivial variety of lattices,
$\bar{L}$ will belong to any nontrivial variety~$\varV$ containing~$L$.
\end{proof}

The next lemma gives the construction we
will use to weld our $I$-tuple of isotone maps into one map.

\begin{lemma}\label{L.M'toM}
Let $M$ be a lattice, $e$ any element of $M$, and $I$
a nonempty set.
Let $M'$ be the sublattice of $M^I$ consisting
of those elements $f$ such that $f(i)=e$ for all but finitely many
$i\in I$.
Then there exists a map $\gy\colon M'\to M$ such that
\begin{equation}\begin{minipage}[c]{25pc}\label{d.isotone}
$\gy$ is isotone
\end{minipage}\end{equation}
and
\begin{equation}\begin{minipage}[c]{25pc}\label{d.all_but_one}
For every $i\in I$, and every $f\in M'$ satisfying
$f(j)=e$ for all $j\neq i$, we have $\gy(f) = f(i)$.
\end{minipage}\end{equation}
\end{lemma}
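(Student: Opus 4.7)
My plan is to pin down the values of $\gy$ on the ``axis'' subset of $M'$, where condition~(\ref{d.all_but_one}) forces them, and then extend isotonically to the rest. For each $i\in I$ and $a\in M$, write $f_{i,a}\in M'$ for the element with value $a$ at coordinate~$i$ and $e$ elsewhere, and let $D=\{f_{i,a}:i\in I,\ a\in M\}\subseteq M'$ (this includes the constant function $e^I$). On $D$, condition~(\ref{d.all_but_one}) forces $\gy(f_{i,a})=a$. A direct check confirms that this partial $\gy$ is already isotone on $D$: the only nontrivial comparisons involve different-index axes, and $f_{i,a}\le f_{j,b}$ with $i\neq j$ forces $a\le e$ at coordinate~$i$ and $e\le b$ at coordinate~$j$, so $\gy(f_{i,a})=a\le e\le b=\gy(f_{j,b})$.

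The substantive step is extending $\gy$ isotonically from $D$ to all of $M'$. Two easy extremes indicate the shape of what's needed: if $e$ is the bottom of~$M$, the formula $\gy(f)=\bigvee_{i\in I}f(i)$ works---a finite join, since $f(i)=e$ for all but finitely many $i$---and it restricts correctly to~$D$; dually, $\gy(f)=\bigwedge_{i\in I}f(i)$ handles the case where $e$ is a top. For general~$e$, the goal is to combine these ideas into a single formula in the $f(i)$'s and~$e$, using each $f(i)$ directly at its coordinate rather than only through its join or meet with~$e$.

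The principal obstacle I expect is that in a general (non-distributive) lattice $M$, the axis value $a=f_{i,a}(i)$ cannot be recovered from $a\vee e$ and $a\wedge e$ alone, so no purely ``above/below-$e$'' decomposition can restore axis values when $a$ is incomparable to~$e$. I would either exhibit an explicit finite-support formula handling that case, or, failing a clean expression, fall back on an extension argument of the following form. For any $f\in M'$, an axis element $f_{i_m,a_m}\le f$ forces $a_m\le f(i_m)$ (with $e\le f(j)$ for $j\ne i_m$), and an axis $f_{j_m,b_m}\ge f$ forces $f(j_m)\le b_m$ (with $f(j)\le e$ for $j\ne j_m$). Any finite collection of such lower- and upper-bounding axes therefore interleaves componentwise through~$f$, so their forced values satisfy $\bigvee_m a_m\le\bigwedge_m b_m$ as a finite join--meet inequality in~$M$, providing a consistent interval in which to place $\gy(f)$. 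A Zorn-style choice of such values (or an explicit transfinite construction along a well-ordering of $M'$) then yields a globally isotone $\gy$, which by construction satisfies both~(\ref{d.isotone}) and~(\ref{d.all_but_one}).
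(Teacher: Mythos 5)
Your setup is sound as far as it goes: you correctly pin down the forced values on the axis set $D$, you correctly identify the two easy extremes (meet everything when $e$ is a greatest element, join everything when $e$ is a least element), and you correctly diagnose the obstacle that $a$ cannot be recovered from $a\vee e$ and $a\wedge e$. But the proof is incomplete, because the explicit formula is never produced and the fallback extension argument does not work. Finite consistency of the constraints coming from $D$ is not enough: the map must be isotone on all of $M'$, so in a transfinite construction the value assigned to $f_\alpha$ at stage $\alpha$ must lie above $\gy(f_\beta)$ for \emph{every} previously handled $f_\beta\leq f_\alpha$ and below $\gy(f_\beta)$ for every previously handled $f_\beta\geq f_\alpha$. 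Once infinitely many elements have been handled, these are infinite families of constraints, and in a non-complete lattice $M$ there need be no single element sitting between them, even though every finite subfamily is satisfiable (for finitely many constraints one can always take the join of the lower ones). This is not a removable technicality: Section~\ref{S.complete} of the paper, following Banaschewski--Bruns, shows that the lattices $M$ into which isotone maps always extend along inclusions of orders are exactly the complete ones, so any argument that treats $D\subseteq M'$ as a generic order extension cannot succeed; one must exploit the specific structure of $M'$.

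The missing ingredient is precisely the formula combining your two extremes. The paper sets $\gy(f)=\bigwedge_{i\in I}f(i)$ if $f(i)\leq e$ for all $i$, and otherwise $\gy(f)=\bigvee\{f(i)\mid f(i)\not\leq e\}$; both are effectively finite since $f(i)=e$ for almost all $i$. Condition~(\ref{d.all_but_one}) is immediate, and isotonicity~(\ref{d.isotone}) follows from the observation that for $f\leq g$ one has $\{i\mid f(i)\not\leq e\}\subseteq\{i\mid g(i)\not\leq e\}$: hence the case can only switch from ``all coordinates below $e$'' to ``not all below $e$'' as one passes from $f$ to $g$, the two pure cases are handled by isotonicity of meet and join (plus the fact that enlarging the set of joinands can only increase a join), and in the mixed case one picks $i$ with $g(i)\not\leq e$ and gets $\gy(f)\leq f(i)\leq g(i)\leq\gy(g)$. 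You should either supply this formula (or an equivalent one) and carry out that case analysis, or replace the Zorn-style step with an argument that genuinely uses the structure of $M'$.
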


\begin{proof}
For $f\in M'$, define
\begin{equation}\begin{minipage}[c]{25pc}\label{d.gy}
$\gy(f) =
  \begin{cases}
  \MMm{f(i)}{i\in I} & \text{if $f(i)\leq e$ for all $i\in I$,}\\[.3em]
  \JJm{f(i)}{i\in I,\ f(i) \nle e} & \text{otherwise.}
  \end{cases}$
\end{minipage}\end{equation}
These meets and joins are defined because for each $f \in M'$,
there are only finitely many distinct values $f(i)$.

It is easy to see that $\gy$ satisfies~(\ref{d.all_but_one}).
To obtain~(\ref{d.isotone}), observe that
\begin{equation}\begin{minipage}[c]{25pc}\label{d.subset}
For $f\leq g$ in $M'$, we have
$\{i\mid f(i)\not\leq e\}\ \ci\ \{i\mid g(i)\not\leq e\}$.
\end{minipage}\end{equation}
Hence given $f\leq g$, there are three possibilities: Either the
definitions of $\gy(f)$ and $\gy(g)$ both fall under the first case
of~(\ref{d.gy}), or they both fall under the second,
or that of $\gy(f)$ falls under the first and
that of $\gy(g)$ under the second.

If both fall under the first case, then $\gy(f)\leq\gy(g)$
because the meet operation of $M$ is isotone.

If both fall under the second, the same conclusion follows
using the fact that the join operation is isotone, together
with the fact that bringing in more joinands, as can happen in
view of~(\ref{d.subset}), yields a join greater than or
equal to what we would get without those additional terms.

Finally, if the evaluation of $\gy(f)$ falls under the first
case and that of $\gy(g)$ under the second, we may choose,
in view of the latter fact, an $i$ such that $g(i)\not\leq e$.
Then
\begin{equation}\begin{minipage}[c]{25pc}\label{d.leqleq}
$\gy(f)\ \leq\ f(i)\ \leq\ g(i)\ \leq\ \gy(g)$,
\end{minipage}\end{equation}
completing the proof of~(\ref{d.isotone}).
\end{proof}

We can now fill in the proof of our main theorem.

\begin{theorem}\label{T.main}
Let $\varV$ be a nontrivial variety of lattices,
$\E L = (L_i \mid i\in I)$ a family of lattices in $\varV$,
and $(\gf_i\colon L_i\to M\mid i\in I)$
a family of isotone maps from the $L_i$ to a
lattice $M$ not necessarily in $\varV$.
Then there exists an isotone map $\gf\colon \Free_{\varV}\E L\to M$
whose restriction to each $L_i\ci \Free_{\varV}\E L$ is $\gf_i$.
\end{theorem}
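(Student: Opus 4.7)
The plan is to combine Lemmas~\ref{L.L&e} and~\ref{L.M'toM} to realize the general strategy outlined at the start of this section: exhibit a lattice $L'\in\varV$, lattice homomorphisms $L_i\to L'$, and a single isotone map $L'\to M$ whose composites recover the given $\gf_i$. Once such a factorization is in hand, the universal property of $\Free_{\varV}\E L$ supplies a lattice homomorphism $\Free_{\varV}\E L\to L'$ extending the $L_i\to L'$, and post-composing with $L'\to M$ yields the required isotone map $\gf$.

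To build the factorization I would fix an arbitrary element $e\in M$ and, for each $i\in I$, apply Lemma~\ref{L.L&e} with this $e$ to obtain an extension $\bar L_i\in\varV$ of $L_i$ and an isotone extension $\bar\gf_i\colon\bar L_i\to M$ of $\gf_i$ with $e\in\bar\gf_i(\bar L_i)$; pick $e_i\in\bar L_i$ with $\bar\gf_i(e_i)=e$. Then take $L'$ to be the sublattice of $\prod_{i\in I}\bar L_i$ consisting of tuples equal to $(e_i)_{i\in I}$ in all but finitely many coordinates, which lies in $\varV$ because $\varV$ is closed under direct products and sublattices. Each $L_i$ embeds into $L'$ as a lattice by sending $x\in L_i$ to the tuple with $i$-component $x$ and $j$-component $e_j$ for $j\neq i$.

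For the isotone map $L'\to M$, I would apply the $\bar\gf_i$ coordinatewise to obtain a map $\prod_i\bar L_i\to M^I$ sending $L'$ into the sublattice $M'\ci M^I$ of Lemma~\ref{L.M'toM} (since $\bar\gf_j(e_j)=e$ for every $j$), and then compose with the isotone $\gy\colon M'\to M$ from that lemma. To check that the composite $L_i\to L'\to M$ recovers $\gf_i$, trace $x\in L_i$: its image in $M^I$ has $i$-entry $\bar\gf_i(x)=\gf_i(x)$ and $j$-entry $e$ for $j\neq i$, so property~(\ref{d.all_but_one}) of $\gy$ returns exactly $\gf_i(x)$. The substantive work has been packed into the two lemmas, so no serious obstacle remains; the delicate point that they jointly overcome --- the absence of any common upper or lower bound for the images $\gf_i(L_i)$, which is what prevented the easier construction~(\ref{d.pre_gy}) from applying --- is handled by manufacturing a ``virtual basepoint'' $e$ inside each $\bar L_i$ and then knitting the resulting maps together via $\gy$.
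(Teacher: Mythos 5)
Your proposal is correct and follows essentially the same route as the paper's own proof: choose an arbitrary $e\in M$, use Lemma~\ref{L.L&e} to adjoin elements $e_i$ mapping to $e$, embed the $L_i$ into the almost-constant sublattice $L'$ of $\prod\bar L_i$, and push forward via $\prod\bar\gf_i$ into the $M'$ of Lemma~\ref{L.M'toM} before applying $\gy$. The verification of the restriction property via~(\ref{d.all_but_one}) matches the paper exactly.
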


\begin{proof}
Choose any element $e\in M$, and extend each $\gf_i$
as in Lemma~\ref{L.L&e} to a map $\bar{\gf}_i\colon \bar{L}_i\to M$
on a lattice extension $\bar{L}_i\ce L_i$ in $\varV$,
so that some $e_i\in\bar{L}_i$ is mapped by
$\bar{\gf}_i$ to $e\in M$.
Now map each $L_i$ into $\prod(\bar{L}_i\mid i \in I)$
by sending every element $x\in L_i$ to the element having $i$-th
coordinate $x$, and $j$-th coordinate $e_j$ for all $j\neq i$.
These maps are lattice homomorphisms, hence together
they induce a
homomorphism $\Free_{\varV}\E L\to\prod(\bar{L}_i \mid i\in I)$.
Moreover, this map
has range in the sublattice $L'$ of elements whose $j$-coordinates
are $e_j$ for almost all $j$, since the image of each $L_i$
lies in that sublattice.

Mapping $\prod\bar{L}_i$ to $M^I$ by the isotone map
$\prod\bar{\gf}_i$, we see that the
above sublattice $L'\ci\prod\bar{L}_i$ is carried
into the sublattice $M'\ci M^I$ of Lemma~\ref{L.M'toM}.
Bringing in the isotone map $f\colon M'\to M$ of
that lemma, we get our desired isotone
map $\gf$ as the composite $\Free_{\varV}\E L\to L'\to M'\to M$.
It follows from~(\ref{d.all_but_one}) that the restriction of $\gf$
to each $L_i$ is $\gf_i$.
\end{proof}

We note a curious consequence of the fact that the $M$ of
Theorem~\ref{T.main} need not lie in $\varV$.

\begin{corollary}\label{C.VtoL}
Let $\E L = (L_i \mid i\in I)$ be a family of lattices in
a nontrivial lattice variety~$\varV$,
let $\Free_{\varV}\E L$ be their free product in $\varV$,
and let $\Free\E L$ be their free product in
the variety $\varL$ of all lattices.
Then there exists an isotone map $\Free_{\varV}\E L\to\Free\E L$
which acts as the identity on each $L_i$.

In particular, for any nontrivial lattice variety $\varV$ and
any set $X$, there exists an isotone map
$\Free_{\varV}(X)\to\Free(X)$ \textup{(}where
these denote the free lattices on the set $X$ in $\varV$
and in $\varL$ respectively\textup{)},
which acts as the identity map on $X$.
\end{corollary}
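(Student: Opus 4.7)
The plan is to apply Theorem~\ref{T.main} directly, with an appropriate choice of codomain. For each $i\in I$, let $\gf_i\colon L_i\to\Free\E L$ be the canonical embedding of $L_i$ into the free product in $\varL$; this is a lattice homomorphism, hence isotone. Theorem~\ref{T.main} does \emph{not} require the target lattice $M$ to lie in $\varV$, so we are free to take $M=\Free\E L$, which in general does not lie in $\varV$. The theorem then produces an isotone map $\gf\colon\Free_{\varV}\E L\to\Free\E L$ whose restriction to each $L_i\ci\Free_{\varV}\E L$ equals $\gf_i$, i.e., the identity inclusion of $L_i$ into $\Free\E L$ --- which is precisely the first assertion.

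For the ``in particular'' statement, I would specialize to the case where the indexing set is $X$ and each $L_x$ is a one-element lattice (which lies in every nontrivial variety of lattices). By the universal property of free products, a homomorphism from the free product of a family of one-element lattices to any lattice $L$ amounts to a choice of one element of $L$ per index; consequently, the free product in $\varV$ of such a family may be identified with the free lattice $\Free_{\varV}(X)$, and similarly for $\varL$, with the distinguished element of the singleton $L_x$ corresponding to the generator $x$. Applying the first part of the corollary to this family then yields the desired isotone map $\Free_{\varV}(X)\to\Free(X)$ acting as the identity on $X$.

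The proof is essentially immediate from Theorem~\ref{T.main}; the only step requiring a moment's thought is the identification, in the preceding paragraph, of the free product of singleton lattices with the free lattice on the index set, and even this is a direct unpacking of universal properties. There is no real obstacle beyond recognizing the right choice of $M$ and $\gf_i$ in Theorem~\ref{T.main}.
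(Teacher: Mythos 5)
Your proposal is correct and matches the paper's own proof exactly: apply Theorem~\ref{T.main} with $M=\Free\E L$ and the $\gf_i$ the canonical inclusions, then specialize to one-element lattices for the second statement. The extra detail you give on identifying the free product of singletons with the free lattice on the index set is a harmless elaboration of what the paper leaves implicit.
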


\begin{proof}
For the first statement, apply Theorem~\ref{T.main} to the
inclusions of the $L_i$ in~$\Free\E L$.
The second is the case of the first where all $L_i$
are one-element lattices.
\end{proof}

\section{Digression: sketches of some alternate proofs of Theorem~\ref{T.main}}\label{S.alt_pfs}

The definition~(\ref{d.gy})
of the isotone map $\gy$ used in the proof of Lemma~\ref{L.M'toM}
is clearly asymmetric in the meet and join operations.

We sketch below a variant proof of Theorem~\ref{T.main} which uses a
function that is symmetric in these operations --- but lacks instead
(when $|I|>2$) the symmetry in the family of lattices $L_i$
which the proof given above clearly has.
We shall then show that one cannot have it both ways:
a map of the required sort
having both sorts of symmetry does not, in general, exist.
However, we show that we \emph{can} get such a map
if $M$ lies in the given variety~$\varV$.

This section will be sketchier than the rest of the paper.
In particular, we will be informal about our two sorts of symmetry;
though in the next-to-last paragraph we will
indicate how to make these considerations precise.

Our new proof of Theorem~\ref{T.main} starts with
a generalization of the construction of Lemma~\ref{L.L&e}.
Namely, suppose we are given isotone maps of \emph{two} lattices into a
common lattice, $\gf_i\colon L_i\to M$ for $i=0,1$.
Let
\begin{equation}\begin{minipage}[c]{25pc}\label{d.L0L1C2C2}
$L'\ =\ L_0\times L_1\times\SC 2\times\SC 2$.
\end{minipage}\end{equation}
Then taking any $e_0\in L_0$, $e_1\in L_1$, we can embed our two
lattices in $L'$ by the homomorphisms
\begin{equation}\begin{minipage}[c]{25pc}\label{d.embed}
$L_0\to L'$\quad acting by\quad $x \mapsto(x,e_1,1,0)$,\\[.3em]
$L_1\to L'$\quad acting by\quad $y \mapsto(e_0,y,0,1)$.
\end{minipage}\end{equation}
Now define the isotone map $\gf'\colon L'\to M$ by
\begin{equation}\begin{minipage}[c]{25pc}\label{d.gf'}
$\begin{cases}
\begin{array}{rlc}
\gf'(x,y,1,0) &=& \gf_0(x),\\
\gf'(x,y,0,1) &=& \gf_1(y),\\
\gf'(x,y,0,0) &=& \gf_0(x) \mm \gf_1(y),\\
\gf'(x,y,1,1) &=& \gf_0(x) \jj \gf_1(y).
\end{array}
\end{cases}$
\end{minipage}\end{equation}
Clearly, $\gf'$ acts on the embedded images of
the $L_i$ by the $\gf_i$; and
as before, since $\SC 2$ belongs to every nontrivial variety of
lattices, $L'$ belongs to any nontrivial variety~$\varV$
containing the~$L_i$.

This, in fact, gives us Theorem~\ref{T.main} for $|I|=2$ by a
construction
symmetric both in meet and join, and in our family of lattices.

Now suppose more generally that we have lattices
$L_i\in\varV$ and isotone maps $\gf_i\colon L_i\to M$
indexed by an arbitrary set $I$.
Assuming without loss of generality that $I$ is an ordinal,
we shall construct the desired $L'\in\varV$ and
isotone map $\gf'\colon L'\to M$ by a recursive transfinite
iteration of the above construction.
It is the recursion that will lose us our symmetry in the $L_i$,
via the arbitrary choice of an identification of $I$ with an
ordinal, i.e., of a well-ordering on~$I$.

To describe the recursion, let $1<k\leq I$, and
assume that we have constructed lattices
$L'_{(j)}$ for all $1\leq j<k$, which satisfy
\begin{equation}\begin{minipage}[c]{25pc}\label{d.L2...Lj}
$L_0\ =\ L'_{(1)}\ \ci\ L'_{(2)}\ \ci\ \dots
\ \ci\ L'_{(j)}\ \ci\ \dots$,
\end{minipage}\end{equation}
together with lattice embeddings $L_i\to L'_{(j)}$ for $i<j$, and
isotone maps $L'_{(j)}\to M$,
and that these form a coherent system, in the sense that for $i<j<j'$,
the composite $L_i\to L'_{(j)}\ci L'_{(j')}$ is the
embedding $L_i\to L'_{(j')}$, and
the composite $L'_{(j)}\ci L'_{(j')}\to M$
is the isotone map $L'_{(j)}\to M$;
and, finally, such that for every $i<j$, the composite
$L_i\to L'_{(j)}\to M$ is the given isotone map $\gf_i$.

If $k$ is a successor ordinal, $k=j+1$,
we apply the $|I|=2$ case of our construction, described
in~(\ref{d.L0L1C2C2})-(\ref{d.gf'}), to the pair of lattices
$L'_{(j)}$ and $L_{j}$ and their isotone maps to~$M$,
calling the resulting lattice
\begin{equation}\begin{minipage}[c]{25pc}\label{d.L'k=}
$L'_{(j+1)}\ =\ L'_{(j)}\times L_{j}\times \SC 2\times\SC 2$,
\end{minipage}\end{equation}
and identifying $L'_{(j)}$ with its image therein under
the first map of~(\ref{d.embed}).
If, on the other hand, $k$ is a limit ordinal, we let
$L'_{(k)}$ be the union of the $L'_{(j)}$ over all $j<k$.
In each case, the asserted properties are immediate.
Thus, we can carry our construction up to $k=I$,
the resulting lattice $L'_{(I)}$ being our desired $L'$.

What are the consequences of the different kinds of symmetry of the
construction of the preceding section and the one just sketched?

Because the former was symmetric in the $L_i$, we can deduce,
for instance, that
in the final statement of Corollary~\ref{C.VtoL}, if $X$ is finite,
then the isotone map $\Free_{\varV}(X)\to\Free(X)$ can be taken to
respect the actions of the symmetric group $\operatorname{Sym}(X)$
on these two lattices.
(Why assume $X$ to be finite?
So that in applying Lemma~\ref{L.L&e}, we can choose an
$e\in M=\Free(X)$ invariant under that group action, say the join
of the given generators.
Alternatively, without this finiteness assumption,
if we choose any $x_0\in X$ and
perform our construction with $e=x_0$, we can get our map to
respect the action of $\operatorname{Sym}(X-\{x_0\})$.)

On the other hand, using our new construction we can deduce
that if $\varV$ is closed under taking dual lattices,
we can, instead, in that same final statement of Corollary~\ref{C.VtoL},
take the isotone map $\Free_{\varV}(X)\to\Free(X)$ to
respect the anti-automorphisms of the domain and codomain that
fix the free generators but interchange meet and join.
(Again, we have to decide what to use for our distinguished elements
$e_0$, $e_1$ at each application of~(\ref{d.embed}).
In this case, we may, at each such step, take $e_0$ to be any
of the preceding generators, while
for $e_1$ we have no choice but to use the generator we are adjoining.)

Let us now show that
for $|I|=3$, we cannot get a construction with both
sorts of symmetry.
If we could, then letting $\varD$ denote the
variety of distributive lattices, we could get an isotone map
$\gf\colon\Free_{\varD}(3)\to\Free(3)$ respecting all permutations
of the generators, and also
the anti-automorphisms that interchange meets and joins.

Now $\Free_{\varD}(3)$ has an element invariant under all these
symmetries; namely, writing its three generators $a$, $b$, $c$,
the element
\begin{equation}\begin{minipage}[c]{25pc}\label{d.sym}
$(a\mm b)\jj(b\mm c)\jj(c\mm a)\ =\ (a\jj b)\mm(b\jj c)\mm(c\jj a)$.
\end{minipage}\end{equation}

On the other hand, for any set $X$,
the only elements of $\Free(X)$ that can be invariant
under an anti-automorphism are the given free generators;
this follows from the fact that every element
of $\Free(X)$ other than those generators is either
meet-reducible or join-reducible, but never both.
(Cf.\ \cite[condition (W) on p.477, and Corollary~534(iii)]{GLT3}.)
So $\Free(3)$ has no element with both sorts of symmetry
to which one could send the element given by~(\ref{d.sym}).

Finally, let us show that we \emph{can} get both sorts of
symmetry if the lattice $M$ lies in $\varV$.
(Of course, this restriction makes it impossible to use the result to
prove a version of Corollary~\ref{C.VtoL}.)
We record in the next lemma the raw construction used in the proof.
Though that lemma requires $M$ to lie in $\varV$, it
does not require the same of the $L_i$.
But it is easy to see that if we add the assumption that
the $L_i$ lie in $\varV$, the lattice $L'$ obtained will
lie there as well,
hence the construction will induce, as in Theorem~\ref{T.main},
an isotone map $\gf\colon\Free_{\varV}\E L\to M$
acting as $\gf_i$ on each $L_i$.
Moreover, the construction clearly has all the asserted symmetries.
(We remark that the factor $\Free_{\varV}(I)$ in the construction
reduces, when $|I|=2$, to the lattice $\SC 2\times\SC 2$
of~(\ref{d.L0L1C2C2}).
So one could say it was the fact that all nontrivial lattice
varieties have the same $2$-generator free lattice that allowed
us to get the doubly symmetric construction in that
two-lattice case with no added restriction on $M$.)

\begin{lemma}\label{L.prodtimesfree}
Let $\E L=(L_i \mid i\in I)$ be a family of lattices,
let $M$ be a lattice, and for each $i\in I$,
let $\gf_i\colon L_i\to M$ be an isotone map.

Let $\varV$ be a lattice variety containing~$M$, and
in the free lattice $\Free_{\varV}(I)$, let the $i$-th generator
be denoted $g_i$ for each $i$.
Let
\begin{equation}\begin{minipage}[c]{25pc}\label{d.L'=prod}
$L'\ =\ \prod(L_i\mid i\in I)\,\times\,\Free_{\varV}(I)$.
\end{minipage}\end{equation}

Suppose we choose, for each $i\in I$,
a lattice homomorphism $\gx_i\colon L_i\to L'$
which takes every $x\in L_i$ to an
element whose $i$-th coordinate is $x$ and whose
coordinate in $\Free_{\varV}(I)$ is the generator $g_i$.
\textup{(}For instance, such a family of
homomorphisms $\gx_i$ can be determined by fixing
an element $e_j$ in each $L_j$, and letting $\gx_i(x)$ have,
in addition to the two coordinates just specified,
$j$-th coordinate $e_j$ for all $j\in I-\{i\}$.\textup{)}

Finally, let $\gf'\colon L'\to M$ be the function
taking each pair $(x,w)$, where
\begin{equation}\begin{minipage}[c]{25pc}\label{d.x=}
$x\ =\ (x_i\mid i\in I)\in\prod(L_i\mid i\in I)$\quad
and\quad $w\in\Free_{\varV}(I)$,
\end{minipage}\end{equation}
to $\bar{w}(\gf_i(x)\mid i\in I)$,
where $\bar{w}\colon M^I\to M$ is the operation of evaluating
the lattice
expression $w\in\Free_{\varV}(I)$ at $I$-tuples of elements of $M$.

Then $\gf'$ is an isotone map, and for each $i\in I$,
$\gf'\gx_i=\gf_i$.
\end{lemma}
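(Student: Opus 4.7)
The plan is to exploit the fact that, since $M\in\varV$, any $I$-tuple $m=(m_j\mid j\in I)\in M^I$ determines a unique lattice homomorphism $\mathrm{ev}_m\colon\Free_{\varV}(I)\to M$ sending each generator $g_j$ to $m_j$.  This homomorphism, applied to an element $w$, is precisely what the statement denotes by $\bar{w}(m_j\mid j\in I)$.  In these terms, $\gf'(x,w)$ is $\mathrm{ev}_m(w)$ with $m_j=\gf_j(x_j)$ for each $j$.

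For the identity $\gf'\gx_i=\gf_i$, I would observe that for $x\in L_i$, the element $\gx_i(x)\in L'$ has $i$-th coordinate $x$ and $\Free_{\varV}(I)$-coordinate $g_i$, while its other coordinates play no role.  Since $\mathrm{ev}_m(g_i)=m_i$ for every $m$, we get $\gf'(\gx_i(x))=\gf_i(x)$ at once.

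For isotonicity, suppose $(x,w)\leq(x',w')$ in $L'$, so that $x_j\leq x'_j$ for every $j\in I$ and $w\leq w'$ in $\Free_{\varV}(I)$.  I would split the comparison in two steps.  First, with $w$ held fixed, $\gf'(x,w)\leq\gf'(x',w)$ because $\bar{w}$, being a composition of the binary meet and join operations of $M$, is isotone in each argument, and by hypothesis each $\gf_j(x_j)\leq\gf_j(x'_j)$.  Second, with $x'$ held fixed, $\gf'(x',w)\leq\gf'(x',w')$ because $\mathrm{ev}_{(\gf_j(x'_j))}\colon\Free_{\varV}(I)\to M$ is a lattice homomorphism, hence isotone, and so respects the inequality $w\leq w'$.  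Composing the two steps gives $\gf'(x,w)\leq\gf'(x',w')$.

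The only delicate point is the second step: it uses essentially that $M\in\varV$, so that the evaluation map from $\Free_{\varV}(I)$ to $M$ exists and is a homomorphism.  Without this hypothesis, $\bar{w}$ would not even be well-defined on equivalence classes of terms, and isotonicity in the $w$-coordinate could fail --- which is exactly why, as the preceding discussion notes, this route cannot be turned around to prove Corollary~\ref{C.VtoL}.
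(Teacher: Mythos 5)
Your proposal is correct and follows essentially the same route as the paper's own (sketched) proof: both verify $\gf'\gx_i=\gf_i$ directly from the evaluation of the generator $g_i$, and both establish isotonicity by passing from $(x,w)$ to $(x',w')$ in two stages --- first raising the $x$-coordinates, using that $\bar{w}$ depends on only finitely many coordinates and is isotone in each, then raising $w$, using that evaluation at a fixed tuple is a lattice homomorphism $\Free_{\varV}(I)\to M$ (which is where $M\in\varV$ is needed). The paper merely makes the finiteness bookkeeping slightly more explicit by first replacing the coordinates not occurring in $w$, which leaves the value unchanged.
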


\begin{proof}[Sketch of proof]
Clear, except, perhaps, for the conclusion that $\gf'$ is isotone.

To get this, suppose
$(x,w)\leq(x',w')$ in $L'$.
Let us pass from $(x,w)$ to $(x',w')$ in finitely many steps.
At the first step, replace the coordinates $x_i$ of $x$
by $x'_i$ for all $i$ \emph{other} than the finitely many
values corresponding to the variables occurring in the term $w$.
This does not affect the image of our element under $\gf'$.
Next, one by one, replace the finitely many remaining
$x_i$ with the values $x'_i\geq x_i$.
The result is clearly greater than or equal to what we had.
Finally, replace $w$ by $w'\geq w$.
Again, the new value is greater than or equal to the old.
\end{proof}

We remark that the isotone map
$\gf'$ of the above lemma is not, in general, a~lattice homomorphism.
(For instance, let $I=\{0,1\}$, let $L_0=L_1=M=\SC 2$, let the
$\gf_i\colon L_i\to M$ be the identity map, and let $\varV$ be
any nontrivial lattice variety.
Denoting the generators of $\Free_\varV(I)$
by $g_0$ and $g_1$, we note that
\begin{equation}\begin{minipage}[c]{25pc}\label{d.neq}
$((0,1),g_1\mm g_2)\jj((1,0),g_1\mm g_2)\ =\ ((1,1),g_1\mm g_2)$
\quad in $L'$.
\end{minipage}\end{equation}
However, $\gf'$ maps each of the joinands on the left to $0$, but the
term on the right to~$1$.)

We have discussed symmetries of our construction informally above,
leaving it to the reader to see that a construction with a certain
sort of symmetry would imply corresponding properties of the maps
constructed.
These considerations can, of course, be formalized.
If we describe our constructions
as functors on appropriate categories of systems of lattices,
isotone maps, and distinguished elements, then constructions with
various sorts of symmetry allow us to strengthen the conclusion of
Theorem~\ref{T.main} to say that we have functors
respecting certain additional structure on those categories.
We shall not go into these details here, however.

We turn now to some variants of our main result.

\section{When does the same result hold for the inclusion\\
of a general partial lattice $P$ in its free
lattice $L$?}\label{S.complete}

If the lattice $M$ of Theorem~\ref{T.main}
happens to be a \emph{complete} lattice, the conclusion of
that theorem follows from a much more general fact:
Any isotone map from an order $P$ into a complete lattice
can be extended to any order extension $Q$ of $P$.
In other words, in the category of orders, complete
lattices are \emph{injective} with respect to inclusions of orders.

The inclusions of orders are not, up to isomorphism, the only
monomorphisms in the category of orders and isotone maps.
B.\ Banaschewski and G.\ Bruns \cite{B+B} characterize the inclusions
category-theoretically among the monomorphisms, calling them
the \emph{strict} monomorphisms, and they formulate the above result as
the statement that every complete lattice (in their terminology,
every complete partially ordered set) is a ``strict injective'';
to which they also prove the
converse \cite[Proposition~1, (i)$\!\iff\!$(ii)]{B+B}.

Theorem~\ref{T.main} can thus be looked at as saying that if
$P$ is the disjoint union of a family of lattices $L_i$ belonging
to a variety $\varV$, regarded as a partial lattice, then the
inclusion of $P$ in its free lattice $L=\Free_{\varV} P$
behaves a little better than a general
inclusion of orders, in that isotone maps of $P$ to arbitrary lattices,
and not only to complete lattices, can be extended to $L$.

In contrast, we shall see below
that the inclusion of a general partial lattice $P$
in its free lattice $\Free P$ behaves no better in this
way than do arbitrary extensions of orders, at least insofar as
isotone maps to lattices are concerned.
(For the concepts of a partial lattice and of the
free lattice on such an object, see \cite[Section~I.5]{GLT2},
\cite[Sections~I.5.4-I.5.5]{GLT3}.)

We begin with the building blocks from which the ``test cases''
showing this will be put together.

\begin{lemma}\label{L.Bool}
Let $B$ be a boolean lattice with $>2$ elements.
Then as an extension of $B-\{0,1\}$, the lattice
$\Free(B-\{0,1\})$ is isomorphic to $B$.
\end{lemma}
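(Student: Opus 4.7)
The plan is to verify that $B$ itself satisfies the universal property of the free lattice on the partial lattice $P = B - \{0,1\}$, where $P$ carries its induced structure (so $x \mm y$ is defined in $P$ precisely when $x \mm y \neq 0$ in $B$, and dually for $\jj$); once this is established, the canonical lattice homomorphism $\Free(P) \to B$ extending the inclusion $P \hookrightarrow B$ must be an isomorphism. That $P$ generates $B$ is immediate: since $|B| > 2$, every $a \in P$ satisfies $a\cp \in P$, and then $0 = a \mm a\cp$ and $1 = a \jj a\cp$, so $B = P \cup \{0,1\}$ lies in the lattice-span of $P$; this also forces uniqueness of any extension.

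For existence, given a partial lattice homomorphism $\gf \colon P \to L$, I would fix some $a \in P$ and extend to $\bar{\gf} \colon B \to L$ by $\bar{\gf}|_P = \gf$, $\bar{\gf}(0) = \gf(a) \mm \gf(a\cp)$, and $\bar{\gf}(1) = \gf(a) \jj \gf(a\cp)$. The main obstacle is verifying $\bar{\gf}$ is a lattice homomorphism; the critical case is showing that for any $x, y \in P$ with $x \mm y = 0$ in $B$, one has $\gf(x) \mm \gf(y) = \bar{\gf}(0)$. I would establish a bridging lemma for this: $\gf(x) \mm \gf(y) = \gf(x) \mm \gf(x\cp)$ whenever $x \mm y = 0$. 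The inequality $\gf(x) \mm \gf(y) \le \gf(x) \mm \gf(x\cp)$ uses $y \le x\cp$, since then $y \mm x\cp = y$ is a defined partial operation, giving $\gf(y) \le \gf(x\cp)$. For the reverse, if $y = x\cp$ there is nothing to prove; otherwise $x \jj y < 1$ in $B$ so $x \jj y \in P$, and the identity $x\cp \mm (x \jj y) = y$ (valid in any boolean lattice) is a composition of defined partial operations. Applying $\gf$ yields $\gf(x\cp) \mm (\gf(x) \jj \gf(y)) = \gf(y)$, whence $\gf(x) \mm \gf(x\cp) \le (\gf(x) \jj \gf(y)) \mm \gf(x\cp) = \gf(y)$, and combined with $\gf(x) \mm \gf(x\cp) \le \gf(x)$ this forces equality.

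Writing $c_x := \gf(x) \mm \gf(x\cp)$, the bridging lemma yields $c_x = c_y$ whenever $x \mm y = 0$ in $B$. To conclude $c_x$ is independent of $x$, I would chain across arbitrary $x, z \in P$: if $x \jj z \ne 1$, the element $y = (x \jj z)\cp$ lies in $P$ and satisfies $x \mm y = z \mm y = 0$, giving $c_x = c_y = c_z$; if $x \jj z = 1$, then $x\cp \mm z\cp = 0$, so bridging on $x\cp, z\cp$ gives $c_{x\cp} = c_{z\cp}$, and since $c_x = c_{x\cp}$ and $c_z = c_{z\cp}$ (applying bridging within the complementary pairs $(x, x\cp)$ and $(z, z\cp)$), the chain closes up. A dual argument handles $\bar{\gf}(1)$, and the remaining homomorphism identities --- notably $\bar{\gf}(0) \le \gf(x)$ for all $x \in P$ --- follow from $\bar{\gf}(0) = c_x \le \gf(x)$.
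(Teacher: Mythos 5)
Your proof is correct, and while it establishes the same underlying fact as the paper --- that the meets which vanish in $B$ and the joins which equal $1$ in $B$ are forced to take common values in any lattice receiving $P=B-\{0,1\}$ --- it gets there by a genuinely different computation. The paper fixes two elements $a,b$ that are distinct from each other and from each other's complements, expands $a=(a\mm b)\jj(a\mm b\cp)$ and $a\cp=(a\cp\mm b)\jj(a\cp\mm b\cp)$, and rearranges the fourfold join to show $a\jj a\cp=b\jj b\cp$, with a separate case when one of the four meets is $0$; only afterwards does it handle non-complementary pairs with join $1$. Your bridging lemma instead treats an arbitrary pair with $x\mm y=0$ at once, via the two inequalities $\gf(x)\mm\gf(y)\le\gf(x)\mm\gf(x\cp)$ (from $y\le x\cp$ being a defined meet) and $\gf(x)\mm\gf(x\cp)\le\gf(x\cp)\mm(\gf(x)\jj\gf(y))=\gf(y)$ (from the defined identity $x\cp\mm(x\jj y)=y$); this avoids the paper's case analysis over vanishing meets, at the cost of a separate connectivity argument (chaining through $(x\jj z)\cp$, or through complements when $x\jj z=1$) to see that $c_x$ is constant. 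You also package the conclusion differently: rather than arguing inside an arbitrary target lattice and then asserting that the free lattice ``just restores'' $0$ and $1$, you verify the universal property of $B$ head-on by exhibiting the extension $\bar{\gf}$ with $\bar{\gf}(0)=\gf(a)\mm\gf(a\cp)$, $\bar{\gf}(1)=\gf(a)\jj\gf(a\cp)$ and checking it is a homomorphism; this makes the final identification $\Free(B-\{0,1\})\cong B$ entirely formal. Both routes are elementary and of comparable length; yours isolates the reusable statement (the bridging lemma) a bit more cleanly, while the paper's is more visibly self-dual.
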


\begin{proof}
Let $P=B-\{0,1\}$.
The only joins that $P$ is missing are those that in $B$ yield $1$;
likewise, the only missing meets are those that yield $0$.
We shall show that all pairs of elements which had join $1$ in $B$
give equal joins in any lattice $L$ into which we map $P$
by a homomorphism of partial lattices.
By symmetry, the dual statement holds for $0$ and meets.
Hence the free lattice on $P$ just restores these two elements,
i.e., it is naturally isomorphic to~$B$.

So suppose we are given a map of $P$ into a lattice $L$, which
preserves the meets and joins of $P$.
By abuse of notation, we shall use the same symbols for elements
of~$P$ and their images in $L$.

Let us first consider any two elements $a,b\in P$ which are
distinct in $P$ from each other and from each other's complements, and
compare the joins $a\jj a\cp$ and $b\jj b\cp$ in $L$
(writing $(\ )\cp$ for complementation in $B$).

Note that in $B$, we have
\begin{equation}\begin{minipage}[c]{25pc}\label{d.a=vee}
$a\ =\ (a\mm b)\jj(a\mm b\cp)$\quad
and\quad $a\cp\ =\ (a\cp\mm b)\jj(a\cp\mm b\cp)$.
\end{minipage}\end{equation}
If the four meets appearing in these two expressions are all nonzero,
then they belong to $P$, and the relations~(\ref{d.a=vee}) hold there,
and hence in $L$.
In this situation, if we expand $a\jj a\cp$ in $L$
using these two formulas, we can rearrange the result as
$((a\mm b)\jj(a\cp\mm b))\jj
((a\mm b\cp)\jj(a\cp\mm b\cp))$, which (by~(\ref{d.a=vee})
with $a$ and $b$ interchanged) simplifies
to $b\jj b\cp$, giving the desired equality.
On the other hand, if any of the four pairwise meets
of $a$ and~$a\cp$ with $b$ and~$b\cp$
is zero, this can, under the assumptions made in the
preceding paragraph, be true only of
one such meet; say $a\mm b=0$.
Then we can repeat the above computation, everywhere
omitting ``$(a\mm b)\jj$''.
(Thus, we have a version of~(\ref{d.a=vee}) with the
first equation simplified to $a=a\mm b\cp$, and the second unchanged.)
With this slight modification, our computation still works, and
again gives $a\jj a\cp=b\jj b\cp$.

So let us write $i$ for the common value,
for all $a\in P$, of $a\jj a\cp\in L$.
We now consider two elements $a,b\in P$
which are not assumed to be complements, but whose join in $B$ is $1$.
This relation implies that $b\geq a\cp$; note that both these
terms lie in $P$.
Hence in $L$ we have $a\jj b\ \geq\ a\jj a\cp\ =\ i$,
while the reverse inequality holds because $a\leq i,\ b\leq i$.
Thus, $a\jj b=i$, completing the proof that
all pairs of elements having join $1$ in $B$
have the same join, namely $i$, in~$L$.
\end{proof}

Let us now consider, independent of the above
result, the same inclusion $B-\nolinebreak\{0,1\}\ci B$
in the context of isotone maps.

\begin{lemma}\label{L.isot_on_freeB}
Let $M$ be a lattice, $X$ a subset of $M$, and $B$ the free Boolean
lattice on~$X$.
Then there exists an isotone map $\gf\colon B-\{0,1\}\to M$ with the
property that
\begin{equation}\begin{minipage}[c]{25pc}\label{d.bounds}
The pairs of elements $y,z\in M$ such that $\gf$ can be
extended to an isotone map $\bar{\gf}\colon  B\to M$
taking $0$ to $y$ and $1$ to $z$, are precisely those
for which $y$ is a lower bound, and $z$ an upper bound, for $X$ in $M$.
\end{minipage}\end{equation}
\end{lemma}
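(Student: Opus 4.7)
The plan is to construct an isotone map $\gf\colon B-\{0,1\}\to M$ satisfying three conditions: (i)~$\gf(x)=x$ for every $x\in X$; (ii)~$\gf$ is isotone; and (iii)~for every $a\in B-\{0,1\}$, the value $\gf(a)$ lies above every lower bound of $X$ in $M$ and below every upper bound of $X$ in $M$. Granted such $\gf$, the desired characterization is then immediate: extending by $\bar{\gf}(0)=y$ and $\bar{\gf}(1)=z$ produces an isotone map iff $y\leq\gf(a)\leq z$ for every $a\in B-\{0,1\}$, which by (i) and (iii) amounts precisely to $y$ being a lower bound and $z$ an upper bound of $X$.

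When $X$ is finite, $B$ is atomistic with atoms indexed by truth assignments $\alpha\in 2^X$. For each $a\in B-\{0,1\}$, I would identify $a$ with its set of atoms $\alpha\leq a$ and set
\[
\gf(a)\;=\;\bigvee_{\alpha\leq a}t(\alpha),\qquad t(\alpha)\;=\;\begin{cases}\bigwedge\{x\in X:\alpha(x)=1\}&\text{if this set is nonempty,}\\[2pt]\bigwedge X&\text{otherwise,}\end{cases}
\]
with all meets and joins evaluated in $M$. Properties (i)--(iii) are then routine checks: (i) because for $a=x_i$ the atom giving exactly $x_i$ true contributes precisely $x_i$ to the join, while every other contributing atom gives a term $\leq x_i$; (ii) because $a\leq b$ in $B$ enlarges the indexing set of the join; (iii) because each $t(\alpha)$ is a finite meet of elements of $X$ (or of all of $X$) and so automatically sits between every lower bound and every upper bound of $X$ in $M$.

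The hard part will be the case of infinite $X$: then $B$ is no longer atomistic, and the value $\bigwedge X$ used in the ``default'' clause of the finite formula need not exist in $M$. The naive substitute of applying the above formula inside the finite subalgebra $B_Y$ for $Y=\mathrm{supp}(a)$ is not isotone, because the default value $\bigwedge Y$ shifts as $Y$ grows. My plan is to handle this case by transfinite recursion along a well-ordering of $B-\{0,1\}$ starting with~$X$: at each stage, assign $\gf(a)$ to any element of the interval determined by the already-defined values (bounded below by the join of $\gf$-values at already-processed elements less than~$a$, above by the meet of $\gf$-values at already-processed elements greater than~$a$), intersected with the allowed region of $M$ between all lower bounds and upper bounds of~$X$. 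The crucial technical step will be to verify that this interval is nonempty at every stage, which I plan to establish by applying the finite atomistic formula from the previous paragraph inside a sufficiently large finite subalgebra $B_Y$ containing $a$ together with the already-processed comparable elements, thereby furnishing a compatible candidate value that respects both the isotonicity invariants and the bounding condition~(iii).
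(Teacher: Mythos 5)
Your reduction of the lemma to finding $\gf$ with properties (i)--(iii) is correct, and your formula for finite $X$ is essentially the paper's construction in that case. But the infinite case is where the lemma earns its keep (in the paper it is applied with $X$ equal to all of $M$, or to an arbitrary nonempty subset and its set of upper bounds), and your plan for that case has a genuine gap. Once infinitely many elements of $B-\{0,1\}$ have been processed --- that is, from stage $\omega$ of your transfinite recursion onward --- the element $a$ currently being processed will in general be comparable to infinitely many already-processed elements (for instance to the various $a\jj b$ and $a\mm b$ already handled), so there is no finite subalgebra $B_Y$ containing $a$ together with all processed elements comparable to it. The ``interval'' whose nonemptiness you must verify is then cut out by infinitely many lower constraints and infinitely many upper constraints, and producing a single element of an arbitrary (incomplete) lattice $M$ satisfying all of them simultaneously is precisely the kind of step that requires completeness of $M$ --- this is the content of Section~\ref{S.complete} and of the Banaschewski--Bruns result quoted there --- so a stage-by-stage extension argument cannot succeed without exploiting much more structure than you have set up. A secondary defect, which you yourself flag, is that your finite formula is not stable under enlarging $Y$ (the default value $\bigwedge Y$ shifts), so even at stages where only finitely many comparable elements have been processed, the candidate computed inside a larger $B_Y$ need not be compatible with values that were assigned earlier via smaller subalgebras.

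The repair is to abandon the recursion in favor of a single uniform formula that works for all $X$ at once. The paper takes $\gf(a)$ to be the join in $M$ of all meets $x_1\mm\cdots\mm x_n$ with $n\geq 1$ and $x_1,\dots,x_n$ distinct in $X$, such that $a\geq g_{x_1}^{\ge_1}\mm\cdots\mm g_{x_n}^{\ge_n}$ for some choice of exponents $\ge_1,\dots,\ge_n\in\{1,\mathrm{c}\}$. This join is effectively finite because every joinand is majorized by one arising from an irredundant such relation, and those involve only generators from the finite support of $a$; isotonicity is immediate since the set of joinands only grows as $a$ increases; each joinand is a nonempty finite meet of elements of $X$ and hence lies between every lower bound and every upper bound of $X$; and $\gf(g_x)=\gf(g_x\cp)=x$. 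These are exactly your properties (i)--(iii), after which the rest of your argument goes through unchanged.
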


\begin{proof}
Let $B$ have free generators $g_x$ for $x\in X$.
For every $a\in B-\{0,1\}$, let $\gf(a)$ be the join
in $M$ of all elements of the form
\begin{equation}\begin{minipage}[c]{25pc}\label{d.x1,xn}
$x_1\mm\cdots\mm x_n$
\end{minipage}\end{equation}
where $n\geq 1$, and $x_1,\dots,x_n$ are distinct elements of $X$
such that for some choice of
$\ge_1,\dots,\ge_n\in\{1,\mathrm{c}\}$, we have
\begin{equation}\begin{minipage}[c]{25pc}\label{d.ageq}
$a\ \geq\ g_{x_1}^{\ge_1}\mm\cdots\mm
g_{x_n}^{\ge_n}$.
\end{minipage}\end{equation}
Here for $b\in B$, $b^\ge$ denotes $b$ if $\ge=1$,
the complement of $b$ if $\ge=\mathrm{c}$.

Because $a\neq 0$, the set of instances of~(\ref{d.ageq})
is nonempty, hence so is the set of joinands~(\ref{d.x1,xn}).
These sets are in general infinite; however, if we take the least
subset $X_0\ci X$ such that $a$ is in the Boolean sublattice
generated by the $g_x$ with $x\in X_0$, then $X_0$ is finite
and (since $a\neq 0,1$) nonempty; and we find that
the irredundant relations~(\ref{d.ageq}) (those relations~(\ref{d.ageq})
from which no meetand can be dropped)
involve only terms $g_x^\ge$ with $x\in X_0$.
Thus, each expression~(\ref{d.x1,xn}) in our description of~$\gf(a)$
is majorized by one that
arises from one of these finitely many irredundant
relations~(\ref{d.ageq}); so the join describing
$\gf(a)$ is effectively a finite join, and so exists in~$M$.

It is not hard to see from our
definition that $\gf$ is isotone, and that
for all $x\in X$, $\gf(g_x)=\gf(g_x\cp)=x$.

Suppose now that we have an extension $\bar{\gf}\colon B\to M$ of
this isotone map $\gf$.
Then for every $x\in X$,
$\bar{\gf}(1)\geq\gf(g_x)=x$, so $\bar{\gf}(1)$ is an
upper bound of $X$.
Conversely, any upper
bound for $X$ in $M$ will majorize all elements~(\ref{d.x1,xn}),
and hence all joins of such elements,
hence will indeed be an acceptable choice
for a value of $\bar{\gf}(1)$
making $\bar{\gf}$ isotone.
Though our construction of $\gf$ is not symmetric in $\jj$ and
$\mm$, the duals of these
observations are easily seen to hold, so the
choices for $\bar{\gf}(0)$ are, likewise, the lower bounds of~$X$.
\end{proof}

Note that (\ref{d.bounds}) above can be summarized as saying that
\begin{equation}\begin{minipage}[c]{25pc}\label{d.bounds_alt}
The upper and lower bounds in $M$ of $\gf(B-\{0,1\})$ are
the same as the upper and lower bounds in $M$ of $X$.
\end{minipage}\end{equation}

In our next result, for any two partial lattices
$P$ and $Q$, we will denote by $P+Q$ the disjoint union of $P$
and $Q$, made a partial lattice using the partial meet and join
operations of $P$ and $Q$, together with the further
meet and join relations corresponding to the condition
that every element of $P$ be majorized by every element
of $Q$ (namely, $p\mm q=p$ and $p\jj q=q$ for all $p\in P,\ q\in Q)$.
It is not hard to see that
\begin{equation}\begin{minipage}[c]{25pc}\label{d.FreeP+Q}
$\Free(P+Q)\ \cong\ \Free P+\Free Q$.
\end{minipage}\end{equation}

\begin{theorem}\label{T.complete}
Let $M$ be a lattice.
Then the following conditions are equivalent.
\vspace{.3em}
\begin{equation}\begin{minipage}[c]{25pc}\label{d.complete}
$M$ is complete.
\end{minipage}\end{equation}
\begin{equation}\begin{minipage}[c]{25pc}\label{d.free_extend}
Any isotone map from a partial lattice $P$ to $M$
can be extended to an isotone map $\Free P\to M$.
\end{minipage}\end{equation}
\begin{equation}\begin{minipage}[c]{25pc}\label{d.Boole_extend}
For $B$ a free Boolean lattice on a nonempty
set, any isotone map $B_1-\{0,1\}\to M$ can be extended
to an isotone map $B\to M$; and
for $B_1$, $B_2$ any two free Boolean lattices on nonempty
sets, any isotone map $(B_1-\{0,1\})+(B_2-\{0,1\})\to M$
can be extended to an isotone map $B_1+B_2\to M$.
\end{minipage}\end{equation}
\end{theorem}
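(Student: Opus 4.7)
My plan is to prove the equivalence via the cycle $(\ref{d.complete})\Rightarrow(\ref{d.free_extend})\Rightarrow(\ref{d.Boole_extend})\Rightarrow(\ref{d.complete})$. The first implication is the Banaschewski--Bruns fact already noted above: complete lattices are injective with respect to inclusions of orders, so any isotone map $P\to M$ extends along $P\subseteq\Free P$. The implication $(\ref{d.free_extend})\Rightarrow(\ref{d.Boole_extend})$ is essentially tautological: by Lemma~\ref{L.Bool}, $\Free(B-\{0,1\})\cong B$, and combining this with~(\ref{d.FreeP+Q}) gives $\Free((B_1-\{0,1\})+(B_2-\{0,1\}))\cong B_1+B_2$, so both halves of~(\ref{d.Boole_extend}) are special cases of~(\ref{d.free_extend}).

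The substantive direction is $(\ref{d.Boole_extend})\Rightarrow(\ref{d.complete})$. Given any subset $X\subseteq M$, I would produce $\sup X$ explicitly; the dual argument yields $\inf X$, and together these deliver completeness. (The empty subset requires only that $M$ have a least element, which one obtains by applying the first half of~(\ref{d.Boole_extend}) with generating set $X=M$ itself and appealing to Lemma~\ref{L.isot_on_freeB}; similarly for a greatest element.)

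For nonempty $X$, let $B_1$ be the free Boolean lattice on $X$, apply Lemma~\ref{L.isot_on_freeB} to get an isotone $\gf_1\colon B_1-\{0,1\}\to M$, and invoke the first half of~(\ref{d.Boole_extend}) together with Lemma~\ref{L.isot_on_freeB} to conclude that the set $U$ of upper bounds of $X$ in $M$ is nonempty. Now let $B_2$ be the free Boolean lattice on $U$, with $\gf_2\colon B_2-\{0,1\}\to M$ the isotone map that Lemma~\ref{L.isot_on_freeB} provides for $U\subseteq M$. The key calculation is that the joint map $\gf_1\sqcup\gf_2\colon(B_1-\{0,1\})+(B_2-\{0,1\})\to M$ is isotone, which reduces to checking $\gf_1(a)\leq\gf_2(b)$ for every $a\in B_1-\{0,1\}$, $b\in B_2-\{0,1\}$. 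The explicit formula in Lemma~\ref{L.isot_on_freeB} writes $\gf_2(b)$ as a join of finite meets of elements of $U$; since finite meets and finite joins of upper bounds of $X$ remain upper bounds of $X$, we get $\gf_2(b)\in U$. Dually $\gf_1(a)$ is a join of finite meets of elements of $X$, hence lies below every element of $U$, giving the desired inequality.

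By the second half of~(\ref{d.Boole_extend}), this joint map extends to an isotone $\bar\gf\colon B_1+B_2\to M$. Lemma~\ref{L.isot_on_freeB} applied to the $B_1$-summand forces $\bar\gf(1_{B_1})\in U$, and applied to the $B_2$-summand forces $\bar\gf(0_{B_2})$ to be a lower bound of $U$ in $M$. Since $1_{B_1}\leq 0_{B_2}$ in $B_1+B_2$, we obtain $\bar\gf(1_{B_1})\leq\bar\gf(0_{B_2})\leq u$ for every $u\in U$, so $\bar\gf(1_{B_1})$ is simultaneously in $U$ and below every element of $U$, i.e.\ $\bar\gf(1_{B_1})=\sup X$. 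The main obstacle, and the only real cleverness in the argument, is choosing to index the second Boolean lattice by the set of upper bounds of $X$: this is precisely what forces the extension to pin down the supremum rather than merely produce some upper bound.
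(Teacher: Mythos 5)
Your proof is correct and follows essentially the same route as the paper's: the same implication cycle, with the substantive step (\ref{d.Boole_extend})$\implies$(\ref{d.complete}) using exactly the paper's device of taking $B_2$ to be the free Boolean lattice on the set of upper bounds of $X$, so that the image of $1_{B_1}$ under the extension is forced to be the least upper bound. The only immaterial differences are that you verify isotonicity of the joint map directly from the explicit formula of Lemma~\ref{L.isot_on_freeB} rather than via the reformulation~(\ref{d.bounds_alt}), and that you obtain nonemptiness of the set of upper bounds by extending over $B_1$ alone rather than by first producing a greatest element of $M$.
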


\begin{proof}
(\ref{d.complete})$\!\implies\!$(\ref{d.free_extend})
is a case of \cite[Proposition~1, (i)$\!\implies\!$(ii)]{B+B},
which says that every complete lattice is injective with
respect to inclusions of orders.
In view of Lemma~\ref{L.Bool} and~(\ref{d.FreeP+Q}), the implication
(\ref{d.free_extend})$\!\implies\!$(\ref{d.Boole_extend}) is clear.

To complete the argument, assume~(\ref{d.Boole_extend}).

Calling on the first statement of~(\ref{d.Boole_extend}),
together with the case $X=M$ of the preceding lemma,
we see that $M$ must have a greatest and a least element.

Now take any nonempty subset $X_1\ci M$, let $X_2$ be the set of
its upper bounds (which is nonempty, since $M$ has a greatest
element), let $B_1$ be the free Boolean lattice on $X_1$, and let
$B_2$ be the free Boolean lattice on $X_2$.
Map $B_1-\{0,1\}$ and~$B_2-\{0,1\}$ into $M$
by maps $\gf_1$, $\gf_2$ satisfying~(\ref{d.bounds})
with respect to $X_1$ and $X_2$, respectively.
By the equivalence of~(\ref{d.bounds}) and~(\ref{d.bounds_alt}),
$\gf_1(B_1-\{0,1\})$ is
majorized by all upper bounds of $X_1$, i.e., by all elements of $X_2$,
hence (again using that equivalence) by
all elements of $\gf_2(B_2-\{0,1\})$;
so $\gf_1$ and $\gf_2$ together constitute
an isotone map $\gf\colon (B_1-\{0,1\})+(B_2-\{0,1\})\to M$.
Extending this to the free lattice $B_1+B_2$
on that partial lattice, we see that the image
of the $1$ of $B_1$ (and likewise that of the
$0$ of $B_2)$ will be both an upper bound of $X_1$ and a lower
bound of $X_2$, hence must be a least upper bound of $X_1$.
So $M$ is upper semicomplete.

By symmetry (or by the known fact that in a lattice with
$0$ and $1$, upper semicompleteness and
lower semicompleteness are equivalent), $M$ is also
lower semicomplete, establishing~(\ref{d.complete}).
\end{proof}

\section{Lattices amalgamated over convex retracts}\label{S.retracts}

The results of the preceding section show that Theorem~\ref{T.main},
looked at as a property of the inclusion of
a certain kind of partial lattice $P$ in $\Free_{\varV} P$,
does not go over to the inclusion of
a general partial lattice $P$ in its free lattice.
Can we describe other interesting partial lattices $P$
for which it does?

In proving Theorem~\ref{T.main}, after reducing to the
case where the given lattices contained elements $e_i$ that
mapped to the same element of $M$, we effectively proved that the
free lattice on the union of those lattices with
amalgamation of the $e_i$ had the desired extension property.
The next theorem will slightly generalize this result, replacing
the singletons $\{e_i\}$ with any family of isomorphic sublattices that
are both retracts of the $L_i$, and convex therein.
We will need the following observation.

\begin{lemma}\label{L.cvx_retr}
Let $M$ be a lattice, and $\gr$ a lattice-theoretic
retraction of $M$ to a convex sublattice.
Then if an element $x\in M$ is majorized by some
element of $\gr(M)$, then it is majorized by $\gr(x)$.
\end{lemma}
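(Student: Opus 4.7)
The plan is to deduce $x \leq \gr(x)$ by sandwiching a well-chosen element between $\gr(x)$ and the hypothesized upper bound $y \in \gr(M)$ of $x$, exploiting both the homomorphism property of $\gr$ and the convexity of its image.

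First, I would apply $\gr$ to the inequality $x \leq y$: since $\gr$ is a lattice homomorphism (hence isotone) and fixes every element of $\gr(M)$, we get $\gr(x) \leq \gr(y) = y$. So both $x$ and $\gr(x)$ lie below the same element $y \in \gr(M)$.

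Next, I would introduce the auxiliary element $z = x \jj \gr(x)$. From $x \leq y$ and $\gr(x) \leq y$ one has $\gr(x) \leq z \leq y$, with $\gr(x)$ and $y$ both in $\gr(M)$. Convexity of $\gr(M)$ then forces $z \in \gr(M)$, so $\gr(z) = z$. On the other hand, the homomorphism property of $\gr$ together with its idempotence on $\gr(M)$ gives $\gr(z) = \gr(x) \jj \gr(\gr(x)) = \gr(x)$. Comparing the two evaluations of $\gr(z)$ yields $x \jj \gr(x) = \gr(x)$, i.e., $x \leq \gr(x)$, as required.

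There is no serious obstacle; the argument fits in a few lines. The only subtle point is the choice of the ``right'' auxiliary element, namely $x \jj \gr(x)$, which is designed so that applying $\gr$ two different ways (using convexity on the one hand, and the homomorphism identities on the other) produces the equation that forces $x$ below $\gr(x)$.
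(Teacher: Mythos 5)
Your proof is correct and follows essentially the same route as the paper's: form the auxiliary element $x \jj \gr(x)$, trap it in the interval $[\gr(x), y]$ to conclude via convexity that it lies in $\gr(M)$, and then compute its image under the idempotent homomorphism $\gr$ to force $x \jj \gr(x) = \gr(x)$. No gaps.
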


\begin{proof}
Say $x\leq r\in\gr(M)$.
Applying $\gr$ to this relation, and taking the join
of the original relation with the resulting one, we get
$x\jj\gr(x)\leq r$.
Hence $x\jj\gr(x)$ lies in the interval between $\gr(x)$
and $r$, so as $\gr(M)$ is assumed convex, $x\jj\gr(x)\in\gr(M)$.
This means that $x\jj\gr(x)$ is fixed under the idempotent
lattice homomorphism $\gr$; but its image under that map is
$\gr(x)\jj\gr(x)=\gr(x)$.
Thus, $x\jj\gr(x)=\gr(x)$, which is equivalent to the desired
conclusion $x\leq\gr(x)$.
\end{proof}

In the above lemma, the assumption that $\gr$ is a lattice
homomorphism could have been weakened to say that it is
a join-semilattice homomorphism.
We have stated it as above for conceptual simplicity, and because
in the proof of the next result,
the maps $\gr_i$ must be lattice homomorphisms anyway.

\begin{theorem}\label{T.retract}
Let $(L_i \mid i\in I)$ be a family of lattices which are
disjoint except for a common sublattice $K$, which is convex
in each $L_i$, and is a retract of each $L_i$ via a lattice-theoretic
retraction $\gr_i\colon L_i\to K$.

Let $P$ denote the partial lattice given by the union of
the $L_i$ with amalgamation of the common sublattice $K$,
and let $L=\Free_\varV P$,
where $\varV$ is any variety containing all the $L_i$.

Then for any lattice $M$ \textup{(}not necessarily
belonging to $\varV$\textup{)} given with isotone maps
$\gf_i\colon L_i\to M$ agreeing on $K$, there exists an isotone map
$\gf\colon L\to M$ extending all the $\gf_i$.

In other words, every isotone map $P\to M$ extends to $L$.
\end{theorem}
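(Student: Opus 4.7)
The plan is to imitate the proof of Theorem~\ref{T.main}: construct a lattice $L'\in\varV$ together with lattice homomorphisms $h_i\colon L_i\to L'$ that agree on $K$, and a single isotone map $\gy\colon L'\to M$ satisfying $\gy h_i=\gf_i$ for every $i$. The $h_i$ then assemble into a partial-lattice homomorphism $P\to L'$, which by the universal property of $\Free_\varV P$ extends to a lattice homomorphism $h\colon \Free_\varV P\to L'$, and $\gf=\gy\circ h$ will be the desired isotone map.

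For $L'$, take the sublattice of $\prod(L_j\mid j\in I)$ consisting of tuples $f$ for which there exists $r\in K$ with $f(j)=r$ for all but finitely many $j\in I$; this lies in $\varV$, being a sublattice of a product of members of $\varV$. (I assume $I$ infinite for notational ease; the finite case reduces to this by padding the family with redundant copies of $K$, which leaves $P$ and $\Free_\varV P$ unchanged.) Define $h_i\colon L_i\to L'$ by sending $x\in L_i$ to the tuple with $i$-th coordinate $x$ and $j$-th coordinate $\gr_i(x)\in K$ for $j\neq i$. Since $\gr_i$ is a lattice homomorphism, so is $h_i$; and for $x\in K$ the tuple $h_i(x)$ is the diagonal, independent of $i$, so the $h_i$ agree on $K$.

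Writing $\gs\colon K\to M$ for the common restriction of the $\gf_i$, and $r(f)\in K$ for the eventual value of $f\in L'$, I define
\begin{equation*}
\gy(f)\ =\ \begin{cases}
\gs(r(f))\mm\MMm{\gf_i(f(i))}{i\in I}
&\text{if $f(i)\leq\gr_i(f(i))$ for all $i\in I$,}\\[.3em]
\JJm{\gf_i(f(i))}{i\in I,\ f(i)\nle\gr_i(f(i))}
&\text{otherwise.}
\end{cases}
\end{equation*}
These meets and joins are effectively finite because $\gf_i(f(i))=\gs(r(f))$ for almost all $i$. A short check analogous to~(\ref{d.all_but_one}) yields $\gy h_i=\gf_i$: for $x\in K$ the tuple $h_i(x)$ is diagonal and the first case gives $\gs(x)=\gf_i(x)$; for $x\notin K$ the only index $k$ with $h_i(x)(k)\nle\gr_k(h_i(x)(k))$ is $k=i$, since all other coordinates already lie in $K$, so the two cases split according to whether $x\leq\gr_i(x)$ and both reduce to $\gf_i(x)$, the first using $\gf_i(x)\leq\gs(\gr_i(x))$.

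The main obstacle is showing that $\gy$ is isotone. The crucial step, the precise analog of~(\ref{d.subset}), is the inclusion
\begin{equation*}
\{\,i\mid f(i)\nle\gr_i(f(i))\,\}\ \ci\ \{\,i\mid g(i)\nle\gr_i(g(i))\,\}\quad\text{whenever $f\leq g$ in $L'$,}
\end{equation*}
and this is exactly where Lemma~\ref{L.cvx_retr} enters: if $g(i)\leq\gr_i(g(i))\in K=\gr_i(L_i)$, then $f(i)\leq g(i)\leq\gr_i(g(i))$ exhibits $f(i)$ as majorized by an element of the convex retract $\gr_i(L_i)$, whence $f(i)\leq\gr_i(f(i))$ by that lemma. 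Granted this inclusion, and noting that $r$---and therefore $\gs\circ r$---is isotone on $L'$, the three-case analysis from Lemma~\ref{L.M'toM}---both $f$ and $g$ in the first case; both in the second; or $f$ in the first and $g$ in the second, with the remaining combination excluded by the inclusion above---carries over essentially verbatim.
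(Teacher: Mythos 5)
Your proposal is correct and follows essentially the same route as the paper's proof: an eventually\nobreakdash-constant subdirect product $L'$, coordinate embeddings built from the retractions $\gr_i$, a two\nobreakdash-case meet/join formula for $\gy$, and Lemma~\ref{L.cvx_retr} supplying exactly the subset inclusion needed for isotonicity. The only (harmless) differences are cosmetic: the paper records the ``eventual value'' as an explicit extra coordinate $f(0)$ with $\gr_i(f(i))=f(0)$ built into the definition of $L'$ (which also handles finite $I$ without your padding device), and its first case omits your redundant extra meetand $\gs(r(f))$.
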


\begin{proof}
Let us assume that $I$ does not contain the symbol $0$,
and use $0$ to index the factor $K$
in $K\times(\prod L_i\mid i\in I)$.
Now let $L'$ denote the sublattice of that direct product consisting
of those elements $f$ such that
$f(i)=f(0)$ for almost all $i$, and $\gr_i(f(i))=f(0)$ for all $i$.
Then we can map each $L_i$ into $L'$ by sending $x\in L_i$
to the element having $i$-th coordinate $x$, and having
$\gr_i(x)$ for all other coordinates (including the $0$-th coordinate).
These maps are lattice homomorphisms (this is where we need
the $\gr_i$ to be lattice homomorphisms and not just
join-semilattice homomorphisms), which agree on
$K$; hence they extend to a lattice homomorphism $L\to L'$.

We shall now map $L'$ isotonely to $M$ using the
idea of Lemma~\ref{L.M'toM}.
Namely, given $f\in L'$, we define
\begin{equation}\begin{minipage}[c]{25pc}\label{d.gy_again}
$\gy(f)\ =\ \begin{cases}
\MMm{\gf_i(f(i))}{i\in I} &
\mbox{if for all $i\in I$, $f(i)\leq f(0)$,}\\[.3em]
\JJm{\gf_i(f(i))}{i\in I,\,f(i)\not\leq f(0)} &
\mbox{otherwise.}
\end{cases}$
\end{minipage}\end{equation}
These are defined because for each $f$, all but finitely
many $i\in I$ have $\gf_i(f(i))$ equal to the image of $f(0)$
in $M$.
(Recall that $f(0)\in K$, and all $\gf_i$ agree on $K.)$
We now claim that
\begin{equation}\begin{minipage}[c]{25pc}\label{d.isotone_again}
$\gy$ is isotone,
\end{minipage}\end{equation}
and
\begin{equation}\begin{minipage}[c]{25pc}\label{d.all_but_one_again}
for every $i\in I$, and every $f\in L'$ such that $f(j)=f(0)$ for
all $j\neq i$, we have $\gy(f) = \gf_i(f(i))$.
\end{minipage}\end{equation}

Assertion~(\ref{d.all_but_one_again}) is clear.
The proof
of~(\ref{d.isotone_again}) is exactly like that of the corresponding
statement,~(\ref{d.isotone}), in the proof of Lemma~\ref{L.M'toM},
once we know the analog of~(\ref{d.subset}), namely
\begin{equation}\begin{minipage}[c]{25pc}\label{d.subset_again}
for $f\leq g$ in $L'$, we have
$\{i\mid f(i)\not\leq f(0)\}\ \ci\ \{i\mid g(i)\not\leq g(0)\}$.
\end{minipage}\end{equation}
To prove~(\ref{d.subset_again}), consider any $i$ not lying in the
right-hand side.
Then
\begin{equation}\begin{minipage}[c]{25pc}\label{d.fi_<gi_<g0}
$f(i)\ \leq\ g(i)\ \leq\ g(0)\in\gr(M)$,
\end{minipage}\end{equation}
so by Lemma~\ref{L.cvx_retr},
$f(i)\leq\gr(f(i))=f(0)$, showing that $i$ also fails to lie in the
left-hand set.

Composing $\gy$ with the map $L\to L'$ of the first
paragraph of this proof, we get our desired isotone map $L\to M$.
\end{proof}

(If we think of the constant $e$ of Lemma~\ref{L.M'toM} as
``sea level'', then the $f(0)$ of the above proof
brings in ``tides''.)

We remark that though in the free-lattice-with-amalgamation
$L$ of the above proof, $K$ is necessarily
a retract, since it was a retract in each of
the $L_i$, it does not follow similarly that $K$ is convex in $L$.
To see this, let us first note an example of a lattice $L'$
having a sublattice $K$ which is convex and a retract in each
of two sublattices $L_0$ and $L_1$ containing $K$, but is not
convex in the sublattice that these generate.
Let $L'$ be the lattice of all subspaces of a $3$-dimensional
vector space $V$ over any field, let $K=\{\{0\},a\}$ where $a$ is
a $2$-dimensional subspace of $V$, and
let each $L_i$ be the sublattice generated by $a$ and a
$1$-dimensional subspace $b_i$ not contained in $a$, with $b_0\neq b_1$.
Then the stated hypotheses are satisfied, but
$0<(b_0\jj b_1)\mm a<a$, so $K$ is not
convex in the lattice generated by $L_1$ and $L_2$.
It easily follows that in the free product $L$ of $L_0$ and $L_1$
with amalgamation of $K=\{0,1\}$, we likewise have
$0<(b_0\jj b_1)\mm a<a$ with the middle term not in $K$.

\section{Semilattice variants---two easy results}\label{S.semilat1}
In our main theorem, free products of
lattices $L_i$, whose normal role is to admit a lattice
homomorphism extending a given family of lattice homomorphisms
on the~$L_i$, were made to do the same for isotone maps
(homomorphisms of orders).
One might expect it to be easier to get similar
results if the gap between lattices and orders is replaced by
one of the smaller gaps between lattices and semilattices,
or between semilattices and orders.

For the latter case, the result is indeed easy; it is only
for parallelism with our other results that
we dignify it with the title of theorem.

\begin{theorem}\label{T.semilat_to_order}
Let $(L_i \mid i\in I)$ be a family of
join-semilattices, and $\gf_i\colon L_i\to M$ a family of
isotone maps from the $L_i$ to a common join-semilattice.
Let $L$ denote the free product of the $L_i$ as join-semilattices.
Then there exists an isotone map $\gf\colon L\to M$
whose restrictions to the $L_i\ci L$ are the $\gf_i$.
\end{theorem}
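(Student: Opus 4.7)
The plan is to adapt the product construction underlying the proof of Theorem~\ref{T.main}, greatly simplified by the absence of meets. First I would adjoin a new least element $\bot_i$ to each $L_i$, obtaining join-semilattices $\bar{L}_i\ce L_i$. In the product $\prod(\bar{L}_i\mid i\in I)$, let $L'$ be the join-subsemilattice of those $f$ whose support $\{i\mid f(i)\neq\bot_i\}$ is finite and nonempty (finite-plus-nonempty support is preserved by componentwise join). Each $L_i$ embeds in $L'$ via the semilattice homomorphism sending $x\in L_i$ to the element with $x$ in coordinate $i$ and $\bot_j$ in every other coordinate, and by the universal property of the free product these maps together induce a single semilattice homomorphism $L\to L'$.

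On $L'$ I would then define the map
$$\gy(f)\ =\ \JJm{\gf_i(f(i))}{i\in I,\ f(i)\neq\bot_i},$$
a finite, nonempty join in $M$. The desired map is the composite $\gf\colon L\to L'\to M$. Its restriction to each $L_i$ acts as $\gf_i$, because $x\in L_i$ maps in $L'$ to an element whose only non-bottom coordinate is the $i$-th, so $\gy$ returns precisely $\gf_i(x)$.

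The only thing left to check is that $\gy$ is isotone. Suppose $f\leq g$ in $L'$; whenever $f(i)\neq\bot_i$ we also have $g(i)\neq\bot_i$ (since $\bot_i$ is the least element of $\bar{L}_i$) and $\gf_i(f(i))\leq\gf_i(g(i))$. Hence the join defining $\gy(g)$ has at least as many joinands as that defining $\gy(f)$, each at least as large as the corresponding one, so $\gy(f)\leq\gy(g)$. There is no real obstacle to overcome: without a meet operation in $M$, the delicate interplay between ``below~$e$'' and ``not below~$e$'' that required the case split in the proof of Lemma~\ref{L.M'toM} simply disappears, which is why, as the authors observe, the result is so much easier than Theorem~\ref{T.main}.
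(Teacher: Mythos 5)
Your proof is correct and is essentially the paper's proof: both ultimately send a formal join $x_{i_1}\jj\cdots\jj x_{i_n}$ (with distinct indices $i_m$) to $\gf_{i_1}(x_{i_1})\jj\cdots\jj\gf_{i_n}(x_{i_n})$ and check isotonicity by noting that passing to a larger element can only enlarge the joinands and add new ones. The only difference is presentational: the paper invokes the known normal form for elements of the free product of join-semilattices and defines the map directly on $L$, whereas you realize that free product concretely inside $\prod(\{\bot_i\}+L_i)$ and route through the universal property, which costs a little setup but avoids citing the normal form.
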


\begin{proof}
The general element $x\in L$ is a formal
join $x_{i_1}\jj\cdots\jj x_{i_n}$
of elements $x_{i_m}\in L_{i_m}$, where $i_1,\dots,i_n$
are a finite nonempty family of distinct indices in $I$.
If we send each such $x$ to
$\gf_{i_1}(x_{i_1})\jj\cdots\jj\gf_{i_n}(x_{i_n})$,
this is easily seen to have the desired properties.
\end{proof}

There was no analog, in the above result, to the $\varV$ of
Theorem~\ref{T.main}, since the variety of semilattices has
no proper nontrivial subvarieties.

On the other hand, if we wish to get an analog of
Theorem~\ref{T.main} with the $L_i$ and~$M$ again lattices, but for
semilattice homomorphisms, rather than isotone maps, we may
again start with lattices $L_i$ in an arbitrary lattice variety $\varV$.
For this situation the authors have not been able to prove
the full analog of Theorem~\ref{T.main}.
The difficulty with adapting our proofs of that theorem
is that the map $\gy$ of Lemma~\ref{L.M'toM}, though isotone,
does not respect joins; nor do the variant constructions
of Section~\ref{S.alt_pfs}.

The map of Lemma~\ref{L.M'toM} does, however, respect joins when
the $e_i$ are least elements in the $\bar{L}_i$.
In that case, the composite
$L'\to M'\to M$ reduces to the map~(\ref{d.pre_gy}) in our sketch of
the ``easy case'' of Theorem~\ref{T.main}, and we
find that if the $\gf_i$ are join-semilattice homomorphisms, that
composite will also be one.
Hence we get

\begin{proposition}\label{P.semilat_bdd_below}
Let $\varV$ be a nontrivial variety of lattices,
$\E L=(L_i \mid i\in I)$ a~family of lattices in $\varV$,
$L=\Free_{\varV}\E L$, and $\gf_i\colon L_i\to M$ a family of
join-semilattice homomorphisms from the $L_i$ to a common lattice,
not necessarily belonging to $\varV$.

If the image-sets $\gf_i(L_i)$ have a common lower bound $e\in M$, then
there exists a join-semilattice homomorphism $\gf\colon L\to M$
whose restrictions to the $L_i\ci L$ are the~$\gf_i$.
In particular, this is so if $M$ has a least element,
or if $I$ is finite and every~$L_i$ has a least element.\qed
\end{proposition}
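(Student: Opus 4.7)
The plan is to invoke exactly the ``easy case'' construction described in Section~\ref{S.main} just before Lemma~\ref{L.L&e}, observing that under the stronger hypothesis that each $\gf_i$ is a join-semilattice homomorphism, the resulting map on $\Free_{\varV}\E L$ is also a join-semilattice homomorphism.

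The first step is to adjoin a new least element $e_i$ to each $L_i$, forming $\bar{L}_i=\{e_i\}+L_i\in\varV$, and to extend $\gf_i$ to $\bar{\gf}_i\colon\bar{L}_i\to M$ by setting $\bar{\gf}_i(e_i)=e$. The essential observation is that $\bar{\gf}_i$ remains a join-semilattice homomorphism: since $e$ is a lower bound of $\gf_i(L_i)$, we have $e\leq\gf_i(x)$ for every $x\in L_i$, and so $\bar{\gf}_i(e_i\jj x)=\gf_i(x)=e\jj\gf_i(x)=\bar{\gf}_i(e_i)\jj\bar{\gf}_i(x)$. This is the only place the common-lower-bound hypothesis enters, and it is the reason we cannot carry out the construction without it.

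Next, I will form the sublattice $L'$ of $\prod(\bar{L}_i\mid i\in I)$ consisting of tuples with $x_i=e_i$ for almost all $i$, embed each $L_i$ into $L'$ as a lattice homomorphism by sending $x$ to the tuple with $x$ in the $i$-th slot and $e_j$ elsewhere, and invoke the universal property to obtain a lattice homomorphism $\Free_{\varV}\E L\to L'$. Then I will map $L'$ to $M$ by formula~(\ref{d.pre_gy}), $\gy(x)=\JJm{\bar{\gf}_i(x_i)}{i\in I}$, which is a genuinely finite join because almost every joinand equals $e$. Since each $\bar{\gf}_i$ preserves binary joins and componentwise joins in $L'$ are sent by $\gy$ to joins of joins in $M$, the map $\gy$ is itself a join-semilattice homomorphism; hence so is the composite $\Free_{\varV}\E L\to L'\to M$. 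Its restriction to each $L_i$ equals $\gf_i$, because on the embedded image of $L_i$ all joinands other than the $i$-th one equal $e\leq\gf_i(x)$ and are therefore absorbed.

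For the ``In particular'' clauses, if $M$ has a least element then that element plays the role of $e$; if $I$ is finite and every $L_i$ has a least element $0_i$, take $e=\MMm{\gf_i(0_i)}{i\in I}$, a finite meet in $M$ which is a common lower bound of the $\gf_i(L_i)$ because each $\gf_i$ is automatically isotone. There is no real obstacle: the proposition is essentially the recognition that the easy-case construction used to prove Theorem~\ref{T.main} already delivers a join-semilattice homomorphism whenever the inputs do, and the entire substance of the proof is the join-preservation check given in the second paragraph above.
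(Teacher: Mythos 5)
Your proposal is correct and follows essentially the same route as the paper: the paper derives the proposition by observing that when the adjoined elements $e_i$ are least elements of the $\bar{L}_i$ (possible precisely because $e$ is a common lower bound of the $\gf_i(L_i)$), the composite map of Theorem~\ref{T.main}'s proof reduces to the ``easy case'' formula~(\ref{d.pre_gy}), which visibly preserves joins when the $\gf_i$ do. Your verification that $\bar{\gf}_i$ remains a join-homomorphism and your treatment of the two ``in particular'' clauses match the intended argument.
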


One could modify this result in the spirit of Theorem~\ref{T.retract},
assuming that each $L_i$ has a retraction $\gr_i$ to a common
\emph{ideal} $K$ on which the $\gf_i$ agree.

In another direction, the condition in
Proposition~\ref{P.semilat_bdd_below} that there
exist a common lower bound $e$ in $M$ to all the $\gf_i(L_i)$ can
be weakened slightly
(for $I$ infinite) to say that $M$ contains a chain $C$ such that
every $\gf_i(L_i)$ is bounded below by some member of $C$.
Let us sketch the argument that gets this, by
transfinite induction, from the statement as given.
First, by passing to a subchain, assume
without loss of generality that $C$ is dually well-ordered.
Then apply Proposition~\ref{P.semilat_bdd_below}, first,
to those $L_i$ such that $\gf_i(L_i)$ is bounded below
by the top element, $c_0$, of $C$, concluding that those
$\gf_i$ can be factored through some lattice $L'_{(0)}$ in $\varV$.
Then go to the next member, $c_1$, of $C$,
and combine $L'_{(0)}$ with all the $L_i$ that are bounded
below by $c_1$ but not by $c_0$, factoring
these together through a lattice $L'_{(1)}\in\varV$; and so on.
As in the discussion following~(\ref{d.L'k=}), we take the
union of the preceding steps whenever we hit a limit ordinal.

\section{Semilattice variants---a harder result}\label{S.semilat2}

What if we have nothing like the lower-bound condition
of Proposition~\ref{P.semilat_bdd_below}?

For free products taken in the variety $\varL$ of all lattices,
the analog of that proposition,
without the lower bound condition, is obtained
in~\cite[middle of p.~239, ``We note finally\,...'']{GLP70}.
Indeed, the map $f$ used in~\cite{GLP70} to prove Theorem~\ref{T.main}
for $L=\Free\E L$ has the property that
$f(x\jj y)=f(x)\jj f(y)$ \emph{except} possibly
when $x$ and $y$ are bounded below by elements
$x_{(i)},\,y_{(i)}\in L_i$ for some $i$, and
$\gf_i(x_{(i)}\jj y_{(i)})>\gf_i(x_{(i)})\jj
\gf_i(y_{(i)})$.
(Cf.\ \cite[p.238, (ii)]{GLP70}.)
If the $\gf_i$ are join-semilattice homomorphisms, that
strict inequality never occurs, so $\gf$ is
also a join-semilattice homomorphism.

If $\varV$ is a nontrivial variety of lattices containing the $L_i$,
we do not know whether we can get the corresponding result for
the free product of the $L_i$ in $\varV$, but we shall show below that
we can get such a result for their
free product in the larger class $\varD\circ\varV$
(definition recalled in~(\ref{d.circ}) and~(\ref{d.D}) below).
Our construction will be similar in broad outline to those used in
preceding sections, but the intermediate lattice $L'$, rather than
being a subdirect product, will be a certain lattice of downsets
in a direct product.

We recall the definition:
\begin{equation}\begin{minipage}[c]{25pc}\label{d.circ}
If $\mathbf{K}_1$ and $\mathbf{K}_2$ are classes
of lattices, then the class of those lattices $L$ which
admit homomorphisms
$\ge\colon L\to L_2$ such that $L_2\in\mathbf{K}_2$, and such that
the inverse image of every element of $L_2$
lies in $\mathbf{K}_1$, is denoted $\mathbf{K}_1\circ\mathbf{K}_2$.
\end{minipage}\end{equation}
The class $\mathbf{K}_1\circ\mathbf{K}_2$ is often called the
\emph{product} of the classes $\mathbf{K}_1$ and $\mathbf{K}_2$,
but we will not use that name here, to avoid confusion with
direct products and free products of lattices.

If $\mathbf{K}_1$ and $\mathbf{K}_2$ are varieties, the class
$\mathbf{K}_1\circ\mathbf{K}_2$ need not be a variety; but
as noted in~\cite{AIM},
if $\mathbf{K}_1$ and $\mathbf{K}_2$ are prevarieties or quasivarieties
(classes closed under taking direct products and sublattices;
respectively, under taking direct products,
ultraproducts, and sublattices),
then $\mathbf{K}_1\circ\mathbf{K}_2$ will also be a
prevariety, respectively a quasivariety.
In~particular, if $\mathbf{K}_1$ and $\mathbf{K}_2$ are varieties,
$\mathbf{K}_1\circ\mathbf{K}_2$ is, at least, a quasivariety.

We also recall the standard notation:
\begin{equation}\begin{minipage}[c]{25pc}\label{d.D}
The variety of distributive lattices is denoted $\varD$.
\end{minipage}\end{equation}

Now suppose $(L_i \mid i\in I)$ is a family of lattices.
To begin the construction of the lattice $L'$ that we shall use
in proving our final result,
let us adjoin to each $L_i$ a new top element, $1_i$,
form the direct product $\prod(L_i+\{1_i\})$, and define the subset
\begin{equation}\begin{minipage}[c]{25pc}\label{d.P}
$P\ =\ \{f\in\prod(L_i+\{1_i\})\mid\{i\mid f(i)\neq 1_i\}$\  is
finite but nonempty$\}$.
\end{minipage}\end{equation}
The condition that $\{i\mid f(i)\neq 1_i\}$ be
nonempty means that we are excluding the top element
of $\prod(L_i+\{1_i\})$; hence if $|I|>1$, $P$ is not a lattice,
though it is a lower semilattice.

For each $i\in I$, let us define a map $\gq_i\colon  L_i\to P$ by
\begin{equation}\begin{minipage}[c]{25pc}\label{d.theta}
$\gq_i(x)(i)=x,\qquad\gq_i(x)(j)=1_j$\ \ for $j\neq i$.
\end{minipage}\end{equation}
We see that every element of $p\in P$ has a representation
\begin{equation}\begin{minipage}[c]{25pc}\label{d.finite_meet}
$p\ =\ \gq_{i_1}(x_1)\mm\cdots\mm\gq_{i_n}(x_n)$\quad with $n>0$,
\end{minipage}\end{equation}
unique up to order of terms, where $i_1,\dots,i_n$ are
distinct elements of $I$, and $x_m\in\nolinebreak L_{i_m}$.

We now let
\begin{equation}\begin{minipage}[c]{25pc}\label{d.L'}
$L'\ =$ the set of all nonempty finitely generated downsets
$F\ci P$ such that
\end{minipage}\end{equation}
\begin{equation}\begin{minipage}[c]{25pc}\label{d.x_vee_y}
for all $i\in I$ and $x,\,y\in L_i$,
if $\gq_i(x),\,\gq_i(y)\in F$, then $\gq_i(x\jj y)\in F$.
\end{minipage}\end{equation}
Thus
\begin{equation}\begin{minipage}[c]{25pc}\label{d.elts_of_L'}
Each element $F\in L'$ is the union of the principal downsets
\mbox{$\downarrow(\gq_{i_1}(x_1)\mm\dots\mm\gq_{i_n}(x_n))$}
determined by its finitely many maximal elements
$\gq_{i_1}(x_1)\mm\dots\mm\gq_{i_n}(x_n)$.
Moreover, for each $i$, $F$ can have at most one such
maximal element of the form $\gq_i(x)$
(i.e., with $n=1$, and with the one meetand arising from $L_i$).
\end{minipage}\end{equation}

The last sentence above follows from~(\ref{d.x_vee_y}):
Given distinct $\gq_i(x),\,\gq_i(y)\in F$,
we also have $\gq_i(x\jj y)\in F$, so $\gq_i(x)$ and
$\gq_i(y)$ cannot both be maximal in $F$.

Let us now prove

\begin{lemma}\label{L.DoV}
Let $(L_i \mid i\in I)$ be a family of lattices, and let
$L'$ be constructed as in\textup{~(\ref{d.P})-(\ref{d.x_vee_y})}
above.
Then
\begin{equation}\begin{minipage}[c]{25pc}\label{d.L'_is_lattice}
$L'$, partially ordered by inclusion, is a lattice.
\end{minipage}\end{equation}
\begin{equation}\begin{minipage}[c]{25pc}\label{d.xi_i_is_hom}
For each $i\in I$, the map $\gx_i\colon L_i\to L'$
defined by $\gx_i(x)=\,\downarrow\gq_i(x)$
is a lattice homomorphism.
\end{minipage}\end{equation}
\begin{equation}\begin{minipage}[c]{25pc}\label{d.DoV}
If $\varV$ is any prevariety containing all the $L_i$,
then $L'\in\varD\circ\varV$.
\end{minipage}\end{equation}
\end{lemma}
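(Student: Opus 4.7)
My plan is to prove the three claims in order; the first two are essentially structural verifications, and the third is the main technical point.

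For~(\ref{d.L'_is_lattice}), I would first observe that $P$ is closed under coordinatewise binary meet in $\prod(L_i+\{1_i\})$, since the support of $p\mm q$ is the union of the two supports, still finite and nonempty. Consequently, the intersection of two elements $F,G\in L'$ is a nonempty finitely generated downset of $P$, and~(\ref{d.x_vee_y}) is inherited trivially by intersection; so meet in $L'$ is intersection. For the join, the key observation is that $\gq_i(x)\leq\gq_{i_1}(x_1)\mm\cdots\mm\gq_{i_n}(x_n)$ in $P$ forces $n=1$ and $i_1=i$ (by comparing the $1_j$-coordinates for $j\neq i$), so the elements of $F$ of the form $\gq_i(\cdot)$ come entirely from the (at most one) such maximal generator of $F$. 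Hence the only possible failure of~(\ref{d.x_vee_y}) in $F\cup G$ is through pairs $\gq_i(x_F)\in F$, $\gq_i(y_G)\in G$ with $\gq_i(x_F\jj y_G)\notin F\cup G$, and replacing each such pair in $F\cup G$ by the single element $\gq_i(x_F\jj y_G)$ produces the least member of $L'$ containing $F\cup G$, which is $F\vee G$.

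For~(\ref{d.xi_i_is_hom}), since meet in $P$ is coordinatewise, $\gq_i(x)\mm\gq_i(y)=\gq_i(x\mm y)$, so $\gx_i(x)\cap\gx_i(y)=\gx_i(x\mm y)$; and the join construction just described yields $\gx_i(x)\jj\gx_i(y)=\gx_i(x\jj y)$.

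For~(\ref{d.DoV}), the plan is to construct a lattice homomorphism $\sigma\colon L'\to D$ with $D\in\varD$ and with every fiber $\sigma^{-1}(d)$ in $\varV$.  The natural candidate is $\sigma(F)=\{\mathrm{supp}(p):p\in F\}$, a finitely generated up-set in the poset of finite nonempty subsets of $I$ ordered by~$\ci$.  Using the meet-semilattice structure of $P$, one checks that $\sigma(F\cap G)=\sigma(F)\cap\sigma(G)$ (given common support $S$ in both, the meet of representative elements has support $S$), and $\sigma(F\vee G)=\sigma(F)\cup\sigma(G)$ (the singleton-support elements introduced by closing $F\cup G$ do not alter the support set). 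The target, a lattice of finitely generated up-sets, is distributive. For a fiber $\sigma^{-1}(U)$ whose minimal supports are all nonsingleton, $F$ has no $\gq_i(\cdot)$-elements, (\ref{d.x_vee_y}) is vacuous, and the fiber embeds as a sublattice of the distributive lattice of finitely generated downsets of $\{p\in P:\mathrm{supp}(p)\in U\}$; since every nontrivial prevariety of lattices contains every distributive lattice (via Birkhoff/Stone-style embedding of a distributive lattice in a power of $\SC 2$), such a fiber lies in $\varV$.

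The main obstacle is fibers over up-sets $U$ that contain a singleton $\{i\}$. There, joining two elements merges their $\gq_i$-generators $\gq_i(x_F), \gq_i(y_G)$ into $\gq_i(x_F\jj y_G)$, and this merged generator can absorb a multi-index maximal generator $p$ of $F$ or $G$ with $p(i)\leq x_F\jj y_G$ but $p(i)\not\leq x_F$ and $p(i)\not\leq y_G$. Consequently the obvious embedding of the fiber into $L_i\times(\text{distributive data})$ fails to respect joins, since the multi-index data changes in a way that depends nonlinearly on the $\gq_i$-labels. The real work of the lemma is to overcome this---either by packaging enough $\gq_i$-label information into $D$ that the residual multi-index data becomes invariant under the operations, or by finding a different embedding of the fiber into a product of $L_i$'s in which absorption corresponds to a coordinatewise join. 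Once $\sigma$ and $D$ are chosen correctly, I expect each fiber to be a sublattice of a product of copies of the $L_i$'s and distributive lattices, hence to lie in $\varV$ by closure of the prevariety under products and sublattices.
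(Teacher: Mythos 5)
Your treatment of~(\ref{d.L'_is_lattice}) and~(\ref{d.xi_i_is_hom}) is correct and essentially the paper's argument, with the useful sharpening that the closure forced by~(\ref{d.x_vee_y}) in forming $F\jj G$ only ever involves the (at most one each) maximal generators of $F$ and of $G$ having singleton support, so one closure step followed by down-closure already lands in $L'$.

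Your argument for~(\ref{d.DoV}), however, has a genuine gap, in fact two. First, it aims at the wrong decomposition: by the paper's definition~(\ref{d.circ}), $L'\in\varD\circ\varV$ means that $L'$ admits a homomorphism onto a lattice \emph{in $\varV$} whose fibers are \emph{distributive}; your proposed $\sigma\colon L'\to D$ with $D\in\varD$ and fibers in $\varV$ would (if completed) establish $L'\in\varV\circ\varD$, a different class. Second, even for that reversed goal the argument is incomplete by your own account: you correctly identify that the fibers of the support map over up-sets containing a singleton $\{i\}$ do not obviously embed into products of the $L_i$, because the join can merge $\gq_i$-generators and absorb multi-index generators, and you leave ``the real work'' undone. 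The paper resolves both problems at once by putting exactly the $\gq_i$-label data you were trying to package into $D$ into the \emph{base} instead: adjoin a bottom element $0_i$ to each $L_i$ and let $\gp_i(F)$ be the largest $x$ with $\gq_i(x)\in F$ (or $0_i$ if there is none). By~(\ref{d.x_vee_y}) and~(\ref{d.elts_of_L'}) each $\gp_i$ is a lattice homomorphism, so together they give $\gp\colon L'\to\prod(\{0_i\}+L_i\mid i\in I)$, with codomain in $\varV$. On each fiber of $\gp$ the $\gq_i$-content of $F$ is fixed for every $i$, so the closure~(\ref{d.x_vee_y}) never fires when joining two members of the fiber; hence there $F\jj G=F\cup G$ and $F\mm G=F\cap G$, making the fiber a ring of sets and therefore distributive. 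That is precisely $L'\in\varD\circ\varV$.
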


\begin{proof}
In verifying~(\ref{d.L'_is_lattice}),
the only points that need a moment's thought are (i) that
the intersection $F\cap G$ of two sets as in~(\ref{d.elts_of_L'})
remains nonempty and finitely generated; but indeed,
in any meet-semilattice, the intersection of two nonempty finitely
generated downsets $\bigcup\downarrow p_i$
and $\bigcup\downarrow q_j$ is the nonempty finitely generated
downset $\bigcup\downarrow p_i\mm q_j$;
(ii) that the closure operation of~(\ref{d.x_vee_y}) cannot
produce the element $(1_i)_{i\in I}\notin P$; this follows from
the fact that each $L_i$ is closed under joins in $L_i+\{1_i\}$;
and (iii) that repeated application of that operation
when we form a join $F\jj G$ cannot
lead to a violation of finite generation as a downset.
This is clear once we observe that in constructing
$F\jj G$ from $F\cup G$, it is enough to apply
the closure operation of~(\ref{d.x_vee_y}) to pairs consisting of one
of the finitely many maximal elements of $F$ and one
of the finitely many maximal elements of $G$ (and then close
as a downset).

Statement~(\ref{d.xi_i_is_hom}) is easily checked.
(Here~(\ref{d.x_vee_y}) guarantees that $\gx_i$ respects joins ---
that is the point of that condition.)

To show~(\ref{d.DoV}), let us now adjoin to each $L_i$ a bottom element
$0_i$, and define maps $\gp_i\colon L'\to \{0_i\}+L_i$ as follows.
For $F\in L'$,
\begin{equation}\begin{minipage}[c]{25pc}\label{d.pi_i}
If there are elements $x\in L_i$ such that $\gq_i(x)\in F$,
let $\gp_i(F)$ be the largest such $x$
(cf.\ second sentence of~(\ref{d.elts_of_L'})).\\[0.5em]
If there are no such $x$, let $\gp_i(F)\ =\ 0_i$.
\end{minipage}\end{equation}
In view of~(\ref{d.x_vee_y}), each $\gp_i$
is a homomorphism; hence together they give us a homomorphism
$\gp\colon L'\to\prod(\{0_i\}+L_i\mid i\in I)\in\varV$.

We claim that the inverse image under $\gp$ of each
$f\in\prod(\{0_i\}+L_i\mid i\in I)$ is distributive.
Indeed, when we take the join of two elements $F,\,G\in\gp^{-1}(f)$,
we see that for each $i$, the sets $F$ and $G$ agree in what
elements $\gq_i(x)$ they contain, hence there is no occasion
for enlarging $F\cup G$ via~(\ref{d.x_vee_y}).
So $F\jj G=F\cup G$.
We always have $F\mm G=F\cap G$
in $L'$; hence $\gp^{-1}(f)$ is a lattice of subsets of $P$
under unions and intersections, hence it is distributive.
Thus, $L'\in\varD\circ\varV$, as claimed.
\end{proof}

Now suppose that for each $i\in I$ we are given an upper semilattice
homomorphism $\gf_i\colon L_i\to M$, for a fixed lattice $M$.
We define $\gy\colon L'\to M$ by
\begin{equation}\begin{minipage}[c]{25pc}\label{d.gy_new}
$\gy(F)\ =
\ \JJm{\gf_{i_1}(x_1)\mm\dots\mm\gf_{i_n}(x_n)\ }%
{\ \gq_{i_1}(x_1)\mm\dots\mm\gq_{i_n}(x_n)\in F}$.
\end{minipage}\end{equation}
This is formally an infinite join; but it is clearly equivalent to
the corresponding join over the finitely many maximal elements of $F$,
hence is defined.

We claim that
\begin{equation}\begin{minipage}[c]{25pc}\label{d.join-hom}
$\gy$ is a join-semilattice homomorphism.
\end{minipage}\end{equation}
To see this, note that if we temporarily extend
the definition~(\ref{d.gy_new}) to arbitrary
finitely generated downsets $F$, not necessarily
satisfying~(\ref{d.x_vee_y}), then we have
\begin{equation}\begin{minipage}[c]{25pc}\label{d.cup=jj}
$\gy(F\cup G)\ =\ \gy(F)\jj\gy(G)$.
\end{minipage}\end{equation}
Now for $F,\,G\in L'$, the element $F\jj G$ is obtained
by bringing into $F\cup G$ elements $\gq_i(x\jj y)$
where $\gq_i(x)\in F$ and $\gq_i(y)\in G$
(and the elements they majorize).
In this situation, the join defining $\gy(F\cup G)$ already
contains joinands $\gf_i(x)$ and $\gf_i(y)$,
resulting from the presence of $\gq_i(x)$ and $\gq_i(y)$
in $F$ and $G$, hence its value in $M$ already majorizes
$\gf_i(x)\jj\gf_i(y) =\gf_i(x\jj y)$.
So bringing $\gq_i(x\jj y)$ into $F\cup G$ does not increase
its image under $\gy$, establishing~(\ref{d.join-hom}).

Finally, comparing the definition~(\ref{d.xi_i_is_hom}) of the
$\gx_i$ and the definition~(\ref{d.gy_new}) of $\gy$, we see that
\begin{equation}\begin{minipage}[c]{25pc}\label{d.gfgy_i}
For all $i\in I,\quad\gf_i\ =\ \gy\,\gx_i$.
\end{minipage}\end{equation}

Now given $\varV$ as in~(\ref{d.DoV}),
let $\E L=(L_i\mid i\in I)$ and
$L=\Free_{\varD\circ\varV}\E L$.
Then the lattice homomorphisms $\gx_i\colon  L_i\to L'$ are
equivalent to a single homomorphism $\gx\colon L\to L'$;
and we see that by taking
$\gf=\gy\,\gx\colon L\to L'\to M$, we get our desired result:

\begin{theorem}\label{T.semilat}
Let $\E L=(L_i \mid i\in I)$ be a family of
lattices, and $\gf_i\colon L_i\to M$ a family of join-semilattice
homomorphisms from the $L_i$ to a common lattice.
Suppose all $L_i$ lie in some prevariety $\varV$ of lattices,
and let $L=\Free_{\varD\circ\varV}\E L$.
Then there exists a join-semilattice homomorphism $\gf\colon L\to M$
whose restrictions to the $L_i$ are the~$\gf_i$.\qed
\end{theorem}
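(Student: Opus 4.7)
The plan is to assemble the lattice $L'$ and the map $\gy$ already constructed in the paragraphs preceding the theorem into the required $\gf$. By Lemma~\ref{L.DoV}, the set $L'$ of nonempty finitely generated downsets of $P$ satisfying~(\ref{d.x_vee_y}) is a lattice lying in $\varD\circ\varV$, and comes equipped with lattice homomorphisms $\gx_i\colon L_i\to L'$ as in~(\ref{d.xi_i_is_hom}). Since $\varV$ is a prevariety, so is $\varD\circ\varV$ (as recalled after~(\ref{d.circ})), and hence the free product $L=\Free_{\varD\circ\varV}\E L$ exists; its universal property packages the $\gx_i$ into a single lattice homomorphism $\gx\colon L\to L'$ restricting to each $\gx_i$ on the corresponding~$L_i$.

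The second step is to define $\gy\colon L'\to M$ by~(\ref{d.gy_new})---for $F\in L'$, the finite join over the maximal meets $\gq_{i_1}(x_1)\mm\cdots\mm\gq_{i_n}(x_n)$ of $F$ of the corresponding expressions $\gf_{i_1}(x_1)\mm\cdots\mm\gf_{i_n}(x_n)$ in $M$---and to verify that $\gy$ is a join-semilattice homomorphism. This is the main obstacle, and it is where the hypothesis that every $\gf_i$ preserves joins finally gets used. I would argue in two stages. First, if we temporarily extend~(\ref{d.gy_new}) to arbitrary nonempty finitely generated downsets of $P$, then $\gy(F\cup G)=\gy(F)\jj\gy(G)$ follows by mere rearrangement of joins. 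Second, the actual join $F\jj G$ in $L'$ is obtained from $F\cup G$ by iterating the closure prescribed by~(\ref{d.x_vee_y}), i.e., by successively adjoining elements $\gq_i(x\jj y)$ whenever $\gq_i(x)\in F$ and $\gq_i(y)\in G$ (and then closing downward). Each such adjunction contributes a new joinand $\gf_i(x\jj y)$ to the expression for $\gy$; but because $\gf_i$ respects joins, $\gf_i(x\jj y)=\gf_i(x)\jj\gf_i(y)$ is already majorized by the joinands $\gf_i(x)$ and $\gf_i(y)$ present in $\gy(F\cup G)$. Hence the closure does not change the value of $\gy$, and $\gy(F\jj G)=\gy(F\cup G)=\gy(F)\jj\gy(G)$.

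Once $\gy$ is known to preserve joins, the identification $\gy\gx_i=\gf_i$ of~(\ref{d.gfgy_i}) is immediate by comparison of the two definitions. The composite $\gf=\gy\gx\colon L\to L'\to M$ is then a join-semilattice homomorphism (the composite of a lattice homomorphism with a join-semilattice homomorphism) whose restriction to each $L_i$ is $\gf_i$, as required. I do not see a way to avoid passing from $\varV$ to $\varD\circ\varV$: the closure condition~(\ref{d.x_vee_y}) is exactly what makes the $\gx_i$ lattice homomorphisms, and the distributive structure of each fibre $\gp^{-1}(f)$ in Lemma~\ref{L.DoV} is precisely what that closure forces on $L'$.
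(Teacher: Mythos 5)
Your proposal is correct and follows essentially the same route as the paper, whose proof of this theorem is exactly the assembly of the preceding construction: Lemma~\ref{L.DoV} for $L'\in\varD\circ\varV$ and the homomorphisms $\gx_i$, the verification~(\ref{d.join-hom}) that $\gy$ preserves joins via~(\ref{d.cup=jj}) and the observation that the closure~(\ref{d.x_vee_y}) adds only joinands already majorized when the $\gf_i$ respect joins, and then the composite $\gf=\gy\gx$. Nothing essential differs.
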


Let us show now by example that the lattice $L'$ constructed in
the above proof may fail to lie in $\varV$ itself.
We start with two distributive lattices, namely,
the one-element lattice $L_0=\{e\}$, and the four-element lattice
$L_1$ generated by two elements $a$ and $b$.
Let us use bar notation for the images of these generators
under the embeddings $\gx_i\colon  L_i\to L'$, so that
$\bar{e}=\gx_0(e)=\ \downarrow(e,1_1)$,
$\bar{a}=\gx_1(a)=\ \downarrow(1_0,a)$,
$\bar{b}=\gx_1(b)=\ \downarrow(1_0,b)$.
We claim that in $L'$,
\begin{equation}\begin{minipage}[c]{25pc}\label{d.nondist}
$\bar{e}\mm(\bar{a}\jj\bar{b})\ \neq
\ (\bar{e}\mm\bar{a})\jj(\bar{e}\mm\bar{b})$.
\end{minipage}\end{equation}
Indeed, one finds that the left-hand side
of~(\ref{d.nondist}) is the principal down-set $\downarrow(e,a\jj b)$,
while the right-hand side is $(\downarrow(e,a))\cup(\downarrow(e,b))$,
a nonprincipal down-set.
Hence $L'$ is not distributive.
It is not even modular: one can similarly verify that
a copy of $N_5$ is given by the elements
\begin{equation}\begin{minipage}[c]{25pc}\label{d.N_5}
$(\bar{e}\mm\bar{a})\jj(\bar{e}\mm
\bar{b})\jj(\bar{a}\mm\bar{b}),\quad
\bar{a}\jj(\bar{e}\mm\bar{b}),\quad
\bar{a}\jj(\bar{e}\mm(\bar{a}\jj\bar{b})),\quad
\bar{a}\jj\bar{b},\quad
(\bar{e}\mm\bar{a})\jj\bar{b}$.
\end{minipage}\end{equation}

Evidence suggesting that the task of extending
semilattice homomorphisms from a family of lattices to
their free product is likely to be harder than the corresponding
task for isotone maps is \cite[Theorem~1]{B+L}
$=$  \cite[Theorem 2.8]{H+K}, which
says that the injective objects in the category
of meet-semilattices are the \emph{frames}, i.e., the complete
lattices satisfying the join-infinite distributive identity.
(This result is generalized in \cite[Theorem~3.1]{Z+Z}.)
Thus, dually, the injective join-semilattices are the complete
lattices satisfying the meet-infinite distributive identity;
in particular, they are distributive; so Theorem~\ref{T.semilat} does
not ``almost'' follow from a general injectivity
statement, as Theorem~\ref{T.main} did.

(While on the topic of injective
objects, what are the injectives in the variety of lattices?
It is shown in \cite[next-to-last paragraph]{B+B} that
the only one is the trivial lattice.
This is generalized in \cite{AD} to any nontrivial
variety $\varV$ of lattices other than the
variety of distributive lattices, and in \cite{EN}, with a
very quick proof, to any class of lattices containing
a $3$-element chain and a nondistributive lattice.)

\section{Questions.}\label{S.questions}
The example following Theorem~\ref{T.semilat} does
not mean that there is no way to factor a family of maps
as in that theorem through the free product of the $L_i$ in
$\varV$; only that the construction by which we have proved
that theorem doesn't lead to such a factorization.
Indeed, for that particular pair of lattices,
one does have such a factorization, by the final clause of
Proposition~\ref{P.semilat_bdd_below}.
So we ask

\begin{question}\label{Q.semilat}
For $\varV$ a general nontrivial variety of lattices,
can one prove the full analog of Theorem~\ref{T.main}
with join-semilattice homomorphisms in place of isotone maps
\textup{(}i.e., a result like Theorem~\ref{T.semilat} with
$\varV$ in place of $\varD\circ\varV$; equivalently, a result
like Proposition~\ref{P.semilat_bdd_below} without the
assumptions on lower bounds\textup{)}?

If that result is not true in general, is it true
if $M$ also belongs to the given variety~$\varV$?
\end{question}

A counterexample to either version of the above question would
probably have to be fairly complicated,
in view of Proposition~\ref{P.semilat_bdd_below}.

In a different direction, note that in our main result,
Theorem~\ref{T.main},
the assumption that $M$ had a lattice structure did not come into
the statement, except to make the concept of isotone map meaningful,
for which a structure of order would have sufficed; though the
lattice structure was used in the proof.
The same observation applies to many of our other results.
This suggests a family of questions.

\begin{question}\label{Q.M_not_lat}
For each of Lemmas~\ref{L.L&e} and~\ref{L.M'toM}
and Theorems~\ref{T.main},~\ref{T.complete}, and~\ref{T.retract},
does the same conclusion hold for a significantly
wider class of orders $M$ than the underlying orders of lattices?

Likewise, for Proposition~\ref{P.semilat_bdd_below}
and Theorem~\ref{T.semilat},
does the same conclusion hold for a significantly
wider class of join-semilattices $M$ than the underlying
join-semilattices of lattices?
\end{question}

When we showed in Section~\ref{S.alt_pfs}
that our main result could not be proved by a
construction with ``too much symmetry'', we called on the
fact that in a free lattice in the variety $\varL$ of all lattices,
no element is doubly reducible (both a proper meet and a proper join;
see sentence following display~(\ref{d.sym})).
Lattices (not necessarily free) with the latter property were
considered in~\cite{tfbl}.
We do not know the answer to

\begin{question}\label{Q.m-j-red}
Are there any nontrivial proper subvarieties $\varV$ of $\varL$ such
that in every free lattice $\Free_{\varV}(X)$, no element
is doubly reducible?
\end{question}

A final tantalizing question is,

\begin{question}\label{C.section}
In the situation of Corollary~\ref{C.VtoL},
can the isotone map $\Free_{\varV}\E L\to\Free\E L$
be taken to be a section \textup{(}left inverse\textup{)}
to the natural lattice homomorphism
$\Free\E L\to\Free_{\varV}\E L$?

In particular, for every variety of lattices
$\varV$ and every set $X$, does the natural lattice homomorphism
$\Free(X)\to\Free_{\varV}(X)$ admit an isotone section?
\end{question}


\end{document}